\documentclass{amsart}
\usepackage{color}
\usepackage{amsmath, amssymb, amsfonts, amstext, amsthm, textcomp, comment,bbm, mathtools}
\usepackage{nccmath,enumitem, braket}
\usepackage[left=3cm, right=3cm]{geometry}
\usepackage{lineno}

  \usepackage[normalem]{ulem}
\usepackage{chngcntr}
\usepackage{enumitem}
\usepackage{float}

\newtheorem{theorem}{Theorem}[section]
\newtheorem{lemma}[theorem]{Lemma}
\newtheorem{corollary}[theorem]{Corollary}
\newtheorem{proposition}[theorem]{Proposition}

\theoremstyle{definition}
\newtheorem{definition}[theorem]{Definition}
\newtheorem{example}[theorem]{Example}

\newtheorem{note}{Note}

\theoremstyle{remark}
\newtheorem{remark}[theorem]{Remark}

\numberwithin{equation}{section}

\newcommand{\N}{\mathbb{N}}
\newcommand{\Z}{\mathbb{Z}}

\newcommand{\R}{\mathbb{R}}

\newcommand{\calB}{\mathcal{B}}

\DeclareMathOperator{\Range}{R}

\DeclareMathOperator{\calD}{\mathcal{D}}
\usepackage[colorlinks=true]{hyperref}

\newcommand{\tr}{\operatorname{tr}} %

\begin{document}

\title{Positive Linear Maps on Second Symmetric Product Spaces}


            
 

\author{Pavankumar Raickwade}
\address{Department of Mathematics, IIT Madras, India}
\curraddr{}
\email{prraickwade@gmail.com}
\thanks{}

\author{K. C. Sivakumar}
\address{Department of Mathematics, IIT Madras, India}
\curraddr{}
\email{kcskumar@iitm.ac.in}
\thanks{}

\keywords{Second symmetric product space, Rank preservers, Positive maps, Projective cone, Completely positive cone}

\subjclass[2020]{Primary 15A69, 15A72; Secondary 46A40.}

\date{}

\dedicatory{}

\begin{abstract} 
   Let $X^{(2)}$ denote the second symmetric product space of a partially ordered vector space $X$, endowed with the projective cone. A characterization of linear maps $T\colon X^{(2)}\to X^{(2)}$ which preserve the set of all positive decomposable vectors, is proved. As applications of this result, an alternative proof, as well as an infinite dimensional generalization, of a representation theorem for (i) automorphisms on the completely positive cone and (ii) linear preservers of CP-rank-1 matrices, are presented. It is also shown that if $T$ preserves the set of all decomposable vectors, then so does the Drazin inverse, $T^D$ (if it exists). The case of the Moore-Penrose inverse is also investigated.
\end{abstract}

\maketitle

\section{Introduction}

For a vector space $X$, let $X\otimes X$ denote its {\it tensor product} with itself. The subspace $X^{(2)}\subseteq X\otimes X$, of all {\it symmetric} tensors of degree two, is called the {\it second symmetric} product space of $X$. For a nonzero element $A\in X^{(2)}$, the {\it rank} of $A$ is the least integer $k$ such that $A=\sum_{i=1}^k \lambda_i (x_i\otimes x_i)$ where $x_1,\ldots, x_k$ are linearly independent vectors in $X$ and $\lambda_1,\ldots, \lambda_k$ are nonzero scalars in $\R.$ The rank atmost one elements in $X^{(2)}$ are called the {\it decomposable elements}. For a linear map $f\colon X\to X$, the linear map $P_2(f)\colon X^{(2)}\to X^{(2)}$, given by 
\[
    P_2(f)(x\otimes y) = f(x)\otimes f(y), \quad \forall x,y\in X,
\]
is called the {\it second symmetric power} of $f$.
A linear map $T\colon  X^{(2)}\to X^{(2)}$ is called {\it rank one non-increasing} (or a linear preserver of decomposable elements) if, $\text{rank}(A)=1$ implies $\text{rank}(T(A))\leq 1$. In \cite[Theorem 1]{Lim01021990}, the author characterized linear maps $T\colon X^{(2)}\to X^{(2)}$ that are rank one non-increasing, as follows.

\begin{theorem}\label{thm::Lim}
    Let $T\colon X^{(2)}\to X^{(2)}$ be linear. Then $T$ is rank one non-increasing if and only if either of the following holds.
    \begin{enumerate}[label=\upshape(\roman*)]
        \item $T=cP_2(f)$ for some linear map $f\colon X\to X$ and $c\in \R\setminus\{0\}$,
        \item $T(\cdot)=\phi(\cdot) y\otimes y$ for some linear functional $\phi\colon X^{(2)}\to \R$ and $y\in X$.
    \end{enumerate}
\end{theorem}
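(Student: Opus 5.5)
The sufficiency direction is immediate. If $T=cP_2(f)$, then $T(\lambda x\otimes x)=c\lambda\, f(x)\otimes f(x)$, which has rank at most one. If $T(\cdot)=\phi(\cdot)\,y\otimes y$, every image is a scalar multiple of $y\otimes y$. For necessity, we work with the rank one elements $x\otimes x$, $x\neq 0$, which span $X^{(2)}$. We may assume $T\neq 0$; otherwise (ii) holds with $\phi=0$. For each $x\neq 0$ write $T(x\otimes x)=\epsilon(x)\, g(x)\otimes g(x)$ with $\epsilon(x)\in\{0,\pm1\}$, so $g(x)$ is determined up to sign wherever $T(x\otimes x)\neq 0$.

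The key tool is the behaviour of $T$ on the span of two independent vectors $x,y$. Expanding $(x+ty)\otimes(x+ty)=x\otimes x+t(x\otimes y+y\otimes x)+t^2\,y\otimes y$, we get
\[
T\bigl((x+ty)\otimes(x+ty)\bigr)=A+tB+t^2C,
\]
and this has rank at most one for every real $t$. Here $A=T(x\otimes x)$ and $C=T(y\otimes y)$ are decomposable, and $B$ is the image of the symmetric tensor. The central lemma I would prove is about such quadratic pencils of symmetric tensors inside a finite-dimensional subspace. Using $2\times2$ minors of the corresponding symmetric matrices, which are polynomials in $t$ that vanish identically, one of two things must hold: either $A$, $B$, $C$ are all multiples of a single $z\otimes z$, or $A=a\otimes a$, $C=\pm c\otimes c$ up to scalars, $B$ is a combination of $a\otimes c+c\otimes a$ (plus the degenerate terms), and the pencil equals $\pm(a+sc)\otimes(a+sc)$ for a suitable $s$. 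This is the standard fact that the subspace of symmetric tensors built on two vectors $a,c$ contains only two kinds of maximal linear spaces of rank $\le1$-along-curves: images of curves, or the line $\R z\otimes z$.

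Next comes the globalisation, which I expect to be the main obstacle. We need the dichotomy to be consistent across all pairs: either the images $g(x)$ span more than one dimension, and then every pair falls into the ``curve'' case, or every $T(x\otimes x)$ lies on a single line $\R y\otimes y$. I would argue by a connectedness/three-vector argument. Suppose $T(x\otimes x)$ and $T(y\otimes y)$ are non-proportional. Then for any third $z$, applying the lemma to the pairs $(x,z)$ and $(y,z)$ forces the curve case. This also makes the signs $\epsilon$ constant, so one $c=\pm1$ can be factored out. Vectors $x$ with $T(x\otimes x)=0$ need separate care: I would show $g$ can be defined as $0$ there, using the pencil with a vector $y$ whose image is nonzero. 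In the other case, $T$ maps every spanning element into $\R y\otimes y$, so $T=\phi(\cdot)\,y\otimes y$ with $\phi$ linear; this is case (ii).

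Finally, in the curve case I would choose signs to make $g$ additive. The pencil computation gives $g(x+ty)=\pm(g(x)+tg(y))$. Fixing a Hamel basis, one defines $f$ linearly on the basis with signs chosen from the pairwise relations, checking consistency on triples via the same lemma. Then $T(x\otimes x)=c\,f(x)\otimes f(x)$ on a spanning set, and polarization gives $T=cP_2(f)$, which is case (i).
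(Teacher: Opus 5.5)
The paper does not prove this statement; it quotes it from Lim \cite{Lim01021990} and uses it as a black box, so your proposal has to stand on its own. Your local lemma is correct and can be proved as you suggest, by reading the $2\times2$ minors of $A+tB+t^2C$ as polynomials in $t$ (the same device the paper uses in the proof of Theorem~\ref{thm::Lim_2.0}). State it precisely, though. Either $A,B,C\in\R\, z\otimes z$, or $A=\epsilon\, a\otimes a$, $B=\epsilon(a\otimes c+c\otimes a)$, $C=\epsilon\, c\otimes c$ with $a,c$ independent and the \emph{same} $\epsilon=\pm1$. To see the sign condition, write $A=\alpha\,a\otimes a$, $C=\gamma\,c\otimes c$; the minors force $B$ to live on the $(a,c)$ entries, and the $t^2$-coefficient of the principal minor on $a,c$ gives $B_{ac}^2=\alpha\gamma>0$. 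There are no extra ``degenerate terms''. The equal-sign conclusion is exactly what you need later to make $\epsilon$ constant.

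The genuine gap is the globalisation, which is the whole content of Lim's theorem, and the tools you name cannot close it.

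\emph{Why the pairwise lemma is insufficient.} If $T(x\otimes x)$ and $T(z\otimes z)$ lie on a common line $\R\, v\otimes v$ (in particular if one of them is $0$), then $A+tB+t^2C$ has rank at most one for \emph{every} $B\in\R\, v\otimes v$. So the lemma says nothing about $T(x\otimes z+z\otimes x)$, although case (i) prescribes its value. Such pairs of independent vectors exist as soon as $f$ is not injective, so ``every pair falls into the curve case'' cannot be obtained pairwise. Applying the lemma to $(x,z)$ and $(y,z)$ only shows that at least one of them is a curve pair, and neither if $T(z\otimes z)=0$. Your kernel step fails for the same reason. Take $X=\R^2$ with basis $\{x,y\}$ and the map $T(x\otimes x)=0$, $T(y\otimes y)=u\otimes u$, $T(x\otimes y+y\otimes x)=2\beta\,u\otimes u$ with $u\neq0\neq\beta$. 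It is rank one non-increasing, so the pencil through $x$ and $y$ cannot force the cross term to vanish.

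\emph{The missing idea: a two-parameter test against a witness.}
\begin{enumerate}[label=\upshape(\arabic*)]
\item Choose $y,w$ with non-proportional images. By the lemma, $T(y\otimes y)=\epsilon\,c\otimes c$, $T(w\otimes w)=\epsilon\,d\otimes d$, and, after replacing $d$ by $-d$, $T(y\otimes w+w\otimes y)=\epsilon(c\otimes d+d\otimes c)$.
\item If $T(x\otimes x)=0$, the lemma gives $T(x\otimes y+y\otimes x)=2\beta\epsilon\,c\otimes c$ and $T(x\otimes w+w\otimes x)=2\delta\epsilon\,d\otimes d$.
\item The $\{c,d\}$-minor of $T((x+sy+tw)\otimes(x+sy+tw))$ is then $2\delta s^2t+2\beta st^2+4\beta\delta st$. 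It vanishes identically only if $\beta=\delta=0$.
\item The analogous computation handles $T(z\otimes z)=\lambda^2T(x\otimes x)\neq0$, using a $w$ whose image is not proportional to $T(x\otimes x)$. It forces $T(x\otimes z+z\otimes x)=\pm2\lambda\,T(x\otimes x)$, with the sign read off from the curve relations of $x$ and $z$ with $w$.
\end{enumerate}
Only after the cross terms and signs are fixed this way (e.g.\ on a Hamel basis extending a pair with non-proportional images) do your sign choice, the ``consistency on triples'', and the polarisation step go through to give $T=cP_2(f)$.
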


Suppose $X$ is a partially ordered vector space with a cone $X_+$. It is then of interest to describe an {\it appropriate} cone in the tensor space $X\otimes X$. There are various ways to do this and this study was initiated in $1960$s by Merklin \cite{1964}, Peressini and Sherbert \cite{Peressini}, Haluniki and Phelps \cite{HULANICKI1968177}. Among others, the most intuitive and the most studied is the {\it projective cone}, given by 
\begin{equation}\label{eq::projective-cone}
    X_+\otimes^\pi X_+:=\Set{\sum_{i=1}^k x_i\otimes y_i\ |\ k\in \N, ~x_i, y_i\in X_+}.
\end{equation}
The study of projective cone has received a lot of attention recently, see, for e.g., \cite{Grobler_Labuschagne_1988, tensor-anke, Wortel2019}. The restriction of projective cone on $X^{(2)}$ is denoted by $X^{(2)}_+$, i.e., $X^{(2)}_+:=(X_+\otimes^\pi X_+)\cap X^{(2)}$. We are interested in obtaining an analogue of Theorem~\ref{thm::Lim}, for the set of all {\it positive} decomposable vectors. We also investigate the question of whether some generalized inverses of rank one non-increasing maps, also share such a property. 

Another cone of interest, particularly in matrix theory and semidefinite optimization, is the {{\it completely positive cone},} described as
\[
    CP(X_+):=\Set{\sum_{i=1}^n u_i\otimes u_i\ |\ u_i\in X_+,~ n\in \N}\subseteq X^{(2)}_+.
\]
For a finite dimensional space $X$ with a closed and generating cone $X_+$, a representation theorem for $Aut(CP(X_+))$, the set of all cone automorphisms, was obtained in \cite{GOWDA20133862}. We are interested in obtaining a representation theorem for $Aut(CP(X_+))$ for an ordered normed space $X$ with a closed and generating cone $X_+$.

This paper is organized as follows. We collect all the necessary notions and terminology, in Section~\ref{prelims}. In Section~\ref{sec::positive-decompo-preservers},  we characterize all linear maps on $X^{(2)}$ that preserve the set of all positive decomposable elements (Theorem~\ref{thm::Lim_2.0}). Using this result, in Section~\ref{sec::CP-rank}, we obtain a representation of linear maps that preserve elements of CP-rank $1$, extending one of the main results in \cite{Beasley2019-sp} to infinite dimensional spaces, and in Section~\ref{Sec::Aut(CP)}, we obtain a characterization of $Aut(CP(X_+))$ (Theorem~\ref{thm::Aut-CP}), generalizing the main result of  \cite{GOWDA20133862}. In Section~\ref{sec::gen-inve}, we consider generalized inverses of rank one non-increasing maps, where we show that the Drazin inverse shares this property (Theorem~\ref{thm::inverse-of-rank-non-increasing}), while the Moore-Penrose inverse does not. We also obtain a necessary and sufficient condition for the Moore-Penrose inverse to possess this property (Theorem~\ref{thm::rank-non-increasing-Moore-Penrose-inverse}).

\section{Preliminaries}\label{prelims}

\subsection{Partially ordered vector spaces}
Let $X$ be a real vector space and $X'$ denote the algebraic  dual of $X$, i.e. the space of all linear functionals on $X$. A partial order $\leq$ on $X$ is called a {\it vector space order} if the following compatibility relations hold: 
\begin{align*}
	\forall x,y,z \in X&\colon \ x \leq y \Longrightarrow x+z\leq y+z,\\
	\forall x,y\in X, \lambda\geq 0&\colon \ x \leq y \Longrightarrow \lambda x\leq \lambda y.
\end{align*}
In this case, we call $X$ a {\it partially ordered vector space}.
A set $K\subseteq X$ is called a {\it cone} if $K$ is closed under addition, $\lambda K\subseteq K$ for every $\lambda \geq 0$, and $K\cap (-K)=\{0\}$. 
If $``\leq"$ is a vector space order on $X$, then  
\begin{equation} 
	\label{eq:cone}
	X_+:=\Set{x \in X\ |\ x\geq 0}
\end{equation} is a cone. On the other hand, if $K\subseteq X$ is a cone, then a vector space order on $X$ is given by \begin{equation}\label{cone_order_relation}
    x\leq y ~:\Longleftrightarrow~  y-x\in K,
\end{equation}
and for $X_+$ as in \eqref{eq:cone},  one obtains $X_+=K$. Hence, there is a one-to-one correspondence between cones and vector space orders, and we denote an ordered vector space by $(X,X_+)$ in case we want to specify the corresponding cone. The articles \cite{cones_and_duality, BARKER1981263, Anke_book} are excellent sources for  information on cones and partially ordered vector spaces.

\begin{definition}
    Let $X$ be a partially ordered vector space with a cone $K$. Let $X'$ denote the (algebraic) dual of $X$. 
    \begin{enumerate}[label=\upshape(\roman*)]
    \item 
        $K$ is said to be {\it generating} if $X=K - K$.
    \item 
        Define the set $K':=\{f\in X'\mid \forall x\in K;~f(x) \geq 0\}$. If $K$ is generating, then $K'$ forms a cone in $X'$, known as the {\it dual cone} of $K$.
    \item   
        An element $x\in K$ is called an {\it extremal} if, for any $0\leq z\leq x$, there exists $\lambda\in [0,1]$ such that $z=\lambda x$. The set of all extremals of $K$ is denoted by $Ext(K)$.
    \item 
        A linear map $T\colon X\to X$ is called {\it positive} if $T[K]\subseteq K.$ The set of all positive operators on $X$, w.r.to $K$, is denoted by $\pi(K)$.
    \item 
        A linear map $T\colon X\to X$ is called a {\it (cone) automorphism} if $T$ is invertible and both $T, T^{-1}\in \pi(K)$. The set of all automorphisms on $K$ is denoted by $Aut(K)$.
    \end{enumerate}
\end{definition}

The following result will be useful in the sequel.
\begin{lemma}\label{lem::inv_image_of_extremal_is_extremal}
    Let $T\colon X\to X$ be an injective, positive linear map. If $T(x)$ is an extremal of $T[K],$ then $x$ is an extremal of $K$. In particular, if $T\in Aut(K)$, then $T[Ext(K)]=Ext(K)$.
\end{lemma}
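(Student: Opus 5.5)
The plan is to prove the first assertion directly from the definition of extremals, and then obtain the second by applying the first to $T$ and to $T^{-1}$. Note that $T[K]$ is a cone: it is closed under addition and nonnegative scaling, and $T[K]\cap(-T[K])=\{0\}$ because $T$ is injective and $K\cap(-K)=\{0\}$. So extremals of $T[K]$ make sense, with respect to the order that $T[K]$ induces on the range of $T$. I interpret ``$T(x)$ is an extremal of $T[K]$'' as: for every $w\in T[K]$ with $T(x)-w\in T[K]$, we have $w=\lambda T(x)$ for some $\lambda\in[0,1]$.

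\textbf{First assertion.} Let $x\in K$ with $T(x)$ extremal in $T[K]$, and take $z\in X$ with $0\le z\le x$, so that $z\in K$ and $x-z\in K$.
\begin{enumerate}[label=\upshape(\alph*)]
\item Set $w:=T(z)$. Then $w\in T[K]$ and $T(x)-w=T(x-z)\in T[K]$. This step uses linearity, together with membership in the image of $K$ itself rather than mere positivity.
\item By extremality of $T(x)$ there is $\lambda\in[0,1]$ with $T(z)=\lambda T(x)=T(\lambda x)$.
\item Injectivity of $T$ gives $z=\lambda x$, so $x\in Ext(K)$.
\end{enumerate}
The only subtle point is that $x$ must lie in $K$. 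This is implicit in the statement, since $T(x)\in T[K]$ and injectivity force $x\in K$.

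\textbf{Second assertion.} Let $T\in Aut(K)$. Since $T$ and $T^{-1}$ are both positive, $T[K]\subseteq K$ and $K=T[T^{-1}[K]]\subseteq T[K]$, so $T[K]=K$.
\begin{enumerate}[label=\upshape(\alph*)]
\item Let $y\in Ext(K)$ and put $x=T^{-1}(y)$. Then $T(x)=y$ is an extremal of $T[K]=K$, and the first part shows $x\in Ext(K)$. Hence $Ext(K)\subseteq T[Ext(K)]$.
\item Applying the same argument to $T^{-1}$, which is also an injective positive map with $T^{-1}[K]=K$, gives $T^{-1}[Ext(K)]\subseteq Ext(K)$. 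Applying $T$ yields $Ext(K)\subseteq T[Ext(K)]$ again, which is the wrong direction.
\item To get $T[Ext(K)]\subseteq Ext(K)$, start from $x\in Ext(K)$ and note that $x=T^{-1}(T(x))$. Here $T(x)\in K=T^{-1}[K]$, and $x$ is an extremal of $T^{-1}[K]=K$. The first part applied to the map $T^{-1}$ and the vector $T(x)$ then gives $T(x)\in Ext(K)$.
\end{enumerate}
Together these give $T[Ext(K)]=Ext(K)$.

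There is no real obstacle in this argument. The only care needed is to keep track of which cone the extremality refers to, and to use $T[K]=K$ for automorphisms.
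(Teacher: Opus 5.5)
Your proof is correct and is essentially the paper's argument. You push $0\le z\le x$ through $T$ to get $0\le T(z)\le T(x)$ in the order of $T[K]$, invoke extremality, cancel by injectivity, and then get the automorphism case by applying the first part to both $T$ and $T^{-1}$; you also usefully make explicit that $T[K]=K$, which the paper leaves implicit.

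The only blemish is step (b) of the second part, which misreports what the argument gives for $T^{-1}$. It actually yields $Ext(K)\subseteq T^{-1}[Ext(K)]$, i.e.\ $T[Ext(K)]\subseteq Ext(K)$. That is precisely what your step (c) proves, so (b) can simply be deleted.
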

\begin{proof}
    Since $T$ is positive and injective, it follows that $T[K]$ is also a cone. Let $T(x)$ be an extremal of $T[K]$ and $0\leq z\leq x$. Then $0\leq T(z)\leq T(x)$. Hence, $T(z)=\lambda T(x)$ for some $\lambda\geq 0$, and so $z=\lambda x$. Thus, $x$ is an extremal of $K$, proving the first part. The second part follows by applying the first to both $T$ and $T^{-1}$.
\end{proof}

\subsection{Tensor spaces}

We recall some basic concepts in the theory of symmetric tensors. For a detailed study on symmetric tensors and tensor algebra, refer, for example, to \cite[Chapter 14]{roman2008advanced}. Let $X$ be a vector space and $X\otimes X$ denote the (algebraic) tensor product of $X$ with itself, so that, $X\otimes X = \text{span}\{x\otimes y\ |\ x,y\in X\}$.  If $I$ is a (Hamel) basis of $X$, then $I^2:=\{x\otimes y\ |\ x,y\in I\}$ forms a basis of $X\otimes X$. Thus, for any $A\in X\otimes X$, there exist finitely many $x_i, y_j\in I$ and scalars $a_{ij}\in \R$ such that 
\[
    A=\sum_{i,j} a_{ij}(x_i\otimes y_j).    
\]
Let $[A]$ be the matrix whose $(i,j)^{th}$ entry is $a_{ij}$. Then, the {\it rank} of $A$, denoted by $\rho(A)$, is defined as the rank of the matrix $[A]$. It is well known that the definition of rank is independent of the choice of the basis $I$. Define the map $\sigma\colon X\otimes X\to X\otimes X$ by $\sigma(x\otimes y)=y\otimes x$ and extend to $X\otimes X$ by linearity. An element $A\in X\otimes X$ is called a {\it symmetric tensor} if $\sigma(A)=A$. The space 
\[
    X^{(2)}:=\Set{A\in X\otimes X\ |\ \sigma(A)=A},
\]
consisting of all symmetric tensors, is a subspace of $X\otimes X$, and is called the {\it second symmetric product space} of $X$. Note that $A\in X\otimes X$ is symmetric if and only if $[A]$ is a symmetric matrix. Also, it is well known that $\text{span}\{x\otimes x\colon x\in X\}=X^{(2)}$. Notice that, 
\[
    \Set{A\in X^{(2)} \ |\ \rho(A)\leq 1}=\Set{\lambda (x\otimes x)\ |\ x\in X,~ \lambda\in \R}.
\]
Recall that the rank atmost one elements in $X^{(2)}$ are called the {\it decomposable elements}. 

\begin{proposition}[{\cite[Theorem 2.5]{tensor-anke}}]
    Let $X$ be a partially ordered vector space with a cone $X_+$. Then set $X_+\otimes^\pi X_+$ as in \eqref{eq::projective-cone} is a cone, known as the {\it projective cone}. Also, if $X_+$ is generating, then so is $X_+\otimes^\pi X_+$.
\end{proposition}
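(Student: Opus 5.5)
We have to show three things: $X_+\otimes^\pi X_+$ is closed under addition, closed under nonnegative scalar multiplication, and satisfies $(X_+\otimes^\pi X_+)\cap(-(X_+\otimes^\pi X_+))=\{0\}$; and, if $X_+$ is generating, that $X\otimes X$ equals this cone minus itself. Closure under addition is immediate from the definition as finite sums. For $\lambda\ge 0$ we have $\lambda(x\otimes y)=(\lambda x)\otimes y$ with $\lambda x\in X_+$, so $\lambda K\subseteq K$ where $K:=X_+\otimes^\pi X_+$.

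The main work is pointedness. Suppose $A=\sum_{i=1}^k x_i\otimes y_i$ and $-A=\sum_{j=1}^m u_j\otimes v_j$ with all vectors in $X_+$. Then $\sum_i x_i\otimes y_i+\sum_j u_j\otimes v_j=0$. Relabel this as a single sum $\sum_{l=1}^N a_l\otimes b_l=0$ with $a_l,b_l\in X_+$; we must show each $a_l\otimes b_l=0$, which forces $A=0$.

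The natural tool is to pair with positive functionals: for $\phi,\psi\in X'$ the bilinear pairing gives $\sum_l\phi(a_l)\psi(b_l)=0$. If $\phi,\psi\in K'$ (the dual cone of $X_+$), each term is $\ge 0$, hence $\phi(a_l)\psi(b_l)=0$ for all $l$. The obstacle is that $X_+'$ may be too small to separate points of $X_+$ in a general ordered vector space (no topology or Archimedean assumption). To avoid that, I would work in the finite-dimensional subspace $F=\operatorname{span}\{a_l,b_l\}$ with the cone $F_+:=F\cap X_+$. I would first check that the identity $\sum a_l\otimes b_l=0$ already holds in $F\otimes F$, by injectivity of $F\otimes F\to X\otimes X$. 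Since $F_+$ need not be closed, I would instead use the finitely generated cone $C=\operatorname{cone}\{a_1,\dots,a_N,b_1,\dots,b_N\}\subseteq F_+$. Being finitely generated, $C$ is closed and polyhedral, and it is pointed because $C\subseteq X_+$ and $X_+$ is pointed. A pointed closed cone in finite dimensions has a strictly positive functional $\phi$ with $\phi(c)>0$ for all $0\ne c\in C$. Taking $\phi=\psi$ gives $\phi(a_l)\phi(b_l)=0$, so for each $l$ either $a_l=0$ or $b_l=0$. Hence $a_l\otimes b_l=0$ for every $l$, and in particular $A=\sum_i x_i\otimes y_i=0$.

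For the generating part, assume $X=X_+-X_+$. It suffices to write each $x\otimes y$ as a difference of elements of $K$, since $K-K$ is a subspace containing all elementary tensors. Write $x=x_1-x_2$ and $y=y_1-y_2$ with $x_i,y_i\in X_+$. Then
\[
x\otimes y=(x_1\otimes y_1+x_2\otimes y_2)-(x_1\otimes y_2+x_2\otimes y_1)\in K-K.
\]
So the only nontrivial step is the strictly positive functional on a finitely generated pointed cone, which is a standard consequence of Farkas/duality for polyhedral cones.
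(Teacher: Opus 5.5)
Your proof is correct and complete. The paper does not prove this proposition; it imports it as Theorem 2.5 of the cited work. So there is no internal argument to compare against, and your write-up supplies a self-contained one.

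The step that carries the weight is the localization in the pointedness argument. The dual cone of $X_+$ may contain no strictly positive functional: for the lexicographic cone in $\R^2$, every positive functional is a nonnegative multiple of the first coordinate, so it vanishes at $(0,1)\in X_+$. You therefore replace $X_+$ by the finitely generated cone $C\subseteq F$, which is closed and pointed, and use a functional $\phi$ that is strictly positive on $C$ only, not on $X_+$. This is exactly what lets the statement hold with no Archimedean, topological or closedness assumption on $X_+$.

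Two minor remarks:
\begin{enumerate}
\item You can bypass the injectivity of $F\otimes F\to X\otimes X$. Extend $\phi$ linearly to all of $X$ through a Hamel basis and evaluate $\phi\otimes\phi$ directly in $X\otimes X$.
\item Writing $K'$ for the dual cone of $X_+$ clashes with your own definition $K:=X_+\otimes^\pi X_+$.
\end{enumerate}

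It is also useful to set your argument beside the paper's proof of Proposition~\ref{ext-CP-K}. That proof uses the same device, $\psi_f(x\otimes y)=f(x)f(y)$, but with arbitrary $f\in X'$. There the tensors are symmetric, $u_i\otimes u_i$, so the terms $f(u_i)^2$ are nonnegative automatically. For the mixed tensors $a_l\otimes b_l$ in your setting, you genuinely need a functional that is nonnegative on both factors and strictly positive on nonzero vectors. That is precisely the role of your strictly positive functional on $C$.
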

Let $X$ be a partially ordered vector space with a cone $X_+$. The restriction $(X_+\otimes^\pi X_+) \cap X^{(2)}$ is a cone in $X^{(2)}$, denoted by $X^{(2)}_+$. Let $A=\sum_{i=1}^n x_i\otimes y_i\in X^{(2)}$. Then 
\[
    A=\frac{1}{2} (A+ \sigma(A))=\frac{1}{2}\left(\sum_i (x_i\otimes y_i) + \sum_i (y_i\otimes x_i)\right)= \frac{1}{2} \sum (x_i\otimes y_i + y_i\otimes x_i).
\]
Thus,
\[
    X^{(2)}_+=\Set{\sum_{i=1}^k (x_i\otimes y_i +y_i\otimes x_i)\ |\ k\in \N, ~x_i, y_i\in X_+}.    
\]
Naturally, the decomposable elements in $X^{(2)}_+$ are called the {\it positive decomposable} elements. Thus, the positive decomposable elements in $X^{(2)}$ are of the form $x\otimes x$ with $x\in X_+$. We refer the reader to \cite{debruyn2022tensorproductsconvexcones} and the references therein for a detailed survey on the theory of cones in tensor product spaces. 
\subsection{Generalized inverses}
Let us recall the notions of the Moore-Penrose inverse and the Drazin inverse 
of a linear operator.
Let $H_1,H_2$ be Hilbert spaces over the same field of scalars and let $\mathcal{B}(H_1,H_2)$ denote the space of all bounded linear operators from $H_1$ to $H_2$.  Let $\Range(T),~ \ker(T)$ denote the range and kernel of $T$, respectively. If $T\in \mathcal{B}(H_1,H_2)$ has closed range, then there exists a unique (\cite[Theorem 2.2.1]{Groetsch}) linear operator $U\in \mathcal{B}(H_2,H_1)$ such that 
    \begin{equation}\label{MP_equations}
        TUT=T,~ UTU=U, ~ (TU)^*=TU, ~ (UT)^*=UT.
    \end{equation}
This operator $U$ is called the {\it Moore-Penrose inverse} of $T$, denoted by $T^\dagger$. On the other hand, if, for $T\in \mathcal{B}(H_1,H_2)$, there exists $U\in \mathcal{B}(H_2,H_1)$ satisfying \eqref{MP_equations}, then $\Range(T)$ is closed. To see this, Let $\{y_n=T(x_n)\}_{n\in \N}\subseteq \Range(T)$ be such that $y_n\to y.$ Then $UT(x_n)\to U(y)$, and so $y_n=TUT(x_n)\to TU(y)$. Thus $y=TU(y)\in \Range(T)$. For more details on the Moore-Penrose inverse, see \cite{Ben_israel_book,Groetsch}.

For the Drazin inverse, we consider a vector space $X$ and a linear operator $T\colon X\rightarrow X$. We say that $T$ is Drazin invertible if there exists $S\colon X\rightarrow X$ and $k\in \Z_+:=\N\cup\{0\}$ such that \begin{equation}\label{eq::drazin_prelim}
        T^kST=T^k,\quad STS=S,\quad TS=ST,
    \end{equation} 
    If such an $S$ exists, then it is unique, called the Drazin inverse of $T$, and is denoted by $T^D$. Also, the least such $k\in \Z_+$ is called the {\it index} of $T$, denoted by $\text{Ind}(T)$. If there does not exist any such $k$, then we set $\text{Ind }T=\infty$. It is well known (cf. \cite[Theorem 4]{Note-Drazin}, or \cite[Theorem 2.25]{Kalauch2026-pg} for a short proof) that $\text{Ind }T\leq k$ if and only if $\Range(T^{k+1})=\Range(T^k)$ and $\ker(T^{k+1})=\ker(T^k)$. The Drazin inverse of a linear operator on a finite dimensional vector space always exists. For more details, we refer the reader to \cite{Ben_israel_book, gen_inv_line_transformation}.

\section{Linear preservers of positive decomposable elements}\label{sec::positive-decompo-preservers}

Let $(X,X_+)$ be a partially ordered vector space. Let $\mathcal{D}:=\{\lambda(x\otimes x)\ |\ x\in X, \;\lambda\in \R\}$ denote the set of decomposable elements of $X^{(2)}$. Then, a linear map $T\colon X^{(2)}\to X^{(2)}$ is rank one non-increasing if and only if $T[\calD]\subseteq \calD$. Recall that a representation theorem for such maps is provided in Theorem~\ref{thm::Lim}. Suppose $X_+$ is a generating cone. Let $\calD_+:=\calD\cap X^{(2)}_+$ denote the set of all positive decomposable elements of $X^{(2)}$. Notice that $\calD_+=\{x\otimes x\ |\ x\in X_+\}$.  In this section, our aim is to characterize linear maps $T\colon X^{(2)}\to X^{(2)}$ that preserve the set $ \calD_+$, by employing Theorem \ref{thm::Lim}. For any matrix $X$ and indices $i,j,k,l$, we let $X\{i,j|k,l\}$ denote the submatrix of $X$ with the rows $i, j$ and columns $k, l$.

\begin{theorem}\label{thm::Lim_2.0}
    Let $X$ be a partially ordered vector space with a generating cone $X_+$. Let $T\colon X^{(2)}\to X^{(2)}$ be a linear map. Then, $T[\calD_+]\subseteq \calD_+$ if and only if either of the following holds.
    \begin{enumerate}[label=\upshape(\roman*)]
        \item \label{(i)}
            $T= P_2(f)$ for some linear map $f\colon X\to X$ with the property that $f[X_+]\subseteq X_+\cup (-X_+)$.
        \item \label{(ii)}
            $T(\cdot)=\phi(\cdot) B$ for some $B\in \calD_+$ and $\phi\in (X^{(2)})'$ such that $\phi[\calD_+]\subseteq [0,\infty)$.
    \end{enumerate}
\end{theorem}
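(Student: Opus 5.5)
The sufficiency direction is a direct check. In case \ref{(i)}, for $x\in X_+$ we have $P_2(f)(x\otimes x)=f(x)\otimes f(x)$. Since $f(x)=\pm u$ with $u\in X_+$, this equals $u\otimes u\in\calD_+$. In case \ref{(ii)}, $T(x\otimes x)=\phi(x\otimes x)B$ is a nonnegative multiple of $B=b\otimes b$, so it equals $(\sqrt{\phi(x\otimes x)}\,b)\otimes(\sqrt{\phi(x\otimes x)}\,b)\in\calD_+$.

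For necessity, the plan is to first upgrade $T[\calD_+]\subseteq\calD_+$ to $T[\calD]\subseteq\calD$, so that Theorem~\ref{thm::Lim} applies, and then extract the positivity constraints.

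\emph{Step 1 (reduction to $\calD$).} Take $z\in X$ and write $z=u-v$ with $u,v\in X_+$, using that $X_+$ is generating. For $s,t\ge 0$ the vector $su+tv$ lies in $X_+$, so $T((su+tv)\otimes(su+tv))$ is decomposable. Expanding by linearity gives
\[
s^2T(u\otimes u)+st\,T(u\otimes v+v\otimes u)+t^2T(v\otimes v)=w(s,t)\otimes w(s,t)
\]
for all $s,t\ge0$. I will set $A=T(u\otimes u)$, $C=T(v\otimes v)$, $M=T(u\otimes v+v\otimes u)$. These live in a finite-dimensional span, so I can pass to coordinates with respect to a basis and work with symmetric matrices. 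The condition then says that the quadratic matrix pencil $Q(s,t)=s^2A+stM+t^2C$ has rank $\le1$ (and is PSD) for all $s,t\ge0$. Every $2\times2$ minor of $Q(s,t)$ is a homogeneous polynomial of degree $4$ in $(s,t)$ vanishing on the quadrant, hence identically zero. This is where the submatrix notation $X\{i,j|k,l\}$ will be used. So $Q(s,t)$ has rank $\le1$ for all real $(s,t)$, and in particular $Q(1,-1)=T(z\otimes z)\in\calD$.

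\emph{Step 2 (apply Theorem~\ref{thm::Lim}).} Either $T=cP_2(f)$ or $T=\phi(\cdot)\,y\otimes y$.

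In the second case, if $T\neq0$, pick $x\in X_+$ with $T(x\otimes x)\neq0$. Such an $x$ exists because $\calD_+$ spans $X^{(2)}$: $\calD$ spans $X^{(2)}$ and Step 1's polarization shows each $z\otimes z$ lies in the span of $\calD_+$. Then $\phi(x\otimes x)\,y\otimes y\in\calD_+$. Replacing $\phi$ by $-\phi$ if necessary, we get $B=y\otimes y\in\calD_+$. Positivity of the remaining values $\phi(x'\otimes x')$ for $x'\in X_+$ follows because each $T(x'\otimes x')$ is a positive decomposable element and a multiple of $B$, and $-B\notin X^{(2)}_+$ unless $B=0$ (cone property).

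In the first case, for $x\in X_+$ we have $c\,f(x)\otimes f(x)=w\otimes w$ with $w\in X_+$. If $c<0$ and $f(x)\neq0$, this puts $w\otimes w$ into $-X^{(2)}_+\cap X^{(2)}_+$, which is impossible. If $f$ vanishes on $X_+$, then $f=0$ and $T=0$ is covered. So $c>0$, and we absorb $\sqrt c$ into $f$. Then $f(x)\otimes f(x)=w\otimes w$ forces $f(x)=\pm w\in X_+\cup(-X_+)$, since equal rank-one symmetric tensors have proportional generators.

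The main obstacle is Step 1. One must make sure that vanishing of the minors on the closed quadrant genuinely forces vanishing on all of $\R^2$ (polynomial identity on a set with nonempty interior). One also needs care that the whole computation happens in a finite-dimensional subspace of $X$ spanned by the relevant tensor factors, so that the matrix rank description of $\rho$ applies.
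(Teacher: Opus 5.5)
Your proposal is correct and follows essentially the paper's proof. The paper also shows that every $2\times 2$ minor of $T\bigl((u_1+\alpha u_2)\otimes(u_1+\alpha u_2)\bigr)$ is a polynomial vanishing on $[0,\infty)$, evaluates it at $\alpha=-1$ to get $T[\calD]\subseteq\calD$, and then applies Theorem~\ref{thm::Lim} and reads off the sign conditions; your homogeneous $(s,t)$ version is the same computation, and your treatment of the case $T=0$ is more careful than the paper's. One small repair to the sign bookkeeping: first use equality of rank-one tensors to get $f(x)=\mu w$ (resp.\ $y=\mu w$) with $w\in X_+$, and only then conclude $c>0$ (resp.\ $y\otimes y\in\calD_+$ and $\phi(x\otimes x)>0$), since $f(x)\otimes f(x)\in X^{(2)}_+$ is not available before that; also drop the step ``replace $\phi$ by $-\phi$,'' which is never needed because $-y\otimes y\notin\calD_+$ for $y\neq 0$.
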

\begin{proof}
    Sufficiency is easily verified. Let $T[\calD_+]\subseteq \calD_+$. First, we show that $T$ is rank one non-increasing, i.e., $T[\calD]\subseteq \calD$.  Let $A\in X^{(2)}$ be such that $\rho(A)=1$. Then $A=\lambda (u\otimes u)$ for some $u\in X$ and $\lambda\in \R\setminus\{0\}$. Since $\rho(\lambda (u\otimes u))=\rho(u\otimes u)$, without loss of generality, assume $\lambda=1$. Since $X_+$ is generating, there exists $u_1, u_2\in X_+$ such that $u=u_1-u_2$. For $\alpha\in \R$, define $u(\alpha):=u_1+\alpha u_2$. Then, for each $\alpha\geq 0$, we have $u(\alpha)\in X_+$. Define $A_\alpha:=u(\alpha)\otimes u(\alpha)\in X^{(2)}$. Notice that $A_{-1}=A$. Observe that 
    \[
        A_\alpha=u_1^2+\alpha^2 u_2^2+\alpha(u_1\otimes u_2+ u_2\otimes u_1).    
    \]
    By hypothesis, for each $\alpha\geq 0$, $\rho(T(A_\alpha))\leq 1$. Hence the determinant of any submatrix of $T(A_\alpha)$ of order $2$ is $0.$ Fix $1\leq i<j$ and $1\leq k<l$. Then, one has $\det ([T(A_\alpha)]\{i,j|k,l\})=0$ for every $\alpha\geq 0$. Since 
    \[
        [T(A_\alpha)]=[T(u_1^2)]+\alpha^2 [T(u_2^2)] + \alpha [T(u_1\otimes u_2+u_2\otimes u_1)],    
    \]
    we have $\det ([T(A_\alpha)]\{i,j|k,l\})$ is a polynomial in $\alpha$ with real coefficients, that is zero on $[0, \infty)$. Hence 
    \[
        \det ([T(A_\alpha)]\{i,j|k,l\})=0 \quad \forall \alpha\in \R.
    \]
    In particular, for $\alpha=-1$. Since $i,j,k,l$ are arbitrary, it follows that $\rho(T(A))\leq 1$. Hence, Theorem~\ref{thm::Lim} applies, and so we only have to check for the additional conditions on $f,$ $\phi$ and $y$. 
    
    Let $T$ be of type \ref{(i)} in Theorem~\ref{thm::Lim} so that $T=cP_2(f)$ for some $c\in \R\setminus\{0\}$ and a linear map $f\colon X\to X$. First observe that, for $\alpha\in \R$ and $0\neq y\in X$, one has $\alpha(y\otimes y)\in \calD_+$ if and only if $\alpha\geq 0$ and $y\in X_+\cup (-X_+)$. For, let $\alpha(y\otimes y) \in \calD_+$, so that $\alpha(y\otimes y)=u\otimes u$ for some $u\in X_+$. Then, by the equality of tensor products, we get $y=\lambda u$ for some $\lambda\in \R$. Thus, $y\in X_+\cup (-X_+)$. Now, if $\alpha<0$, then 
    \[
        \alpha(y\otimes y)\in \calD_+\cap (-\calD_+)\subseteq X^{(2)}_+\cap (-X^{(2)}_+)=\{0\},
    \]
    a contradiction to $y\neq 0$ and so $\alpha \geq 0$. Since $T[\calD_+]\subseteq \calD_+$, for $x\in X_+$, we have 
    $$c (f(x)\otimes f(x))=T(x\otimes x) \in \calD_+.$$ Hence, by the above argument, we get $c> 0$ and $f(x)\in X_+\cup (-X_+)$.  Now, since $c(f(x)\otimes f(x))=(\sqrt{c}f)(x)\otimes (\sqrt{c}f)(x)$ and $\sqrt{c}f\colon X\to X$ is again linear, without loss of generality, we assume that $c=1$.
    
    Let $T$ be of type \ref{(ii)} in Theorem \ref{thm::Lim}, so that, for $A\in X^{(2)}$, we have $T(A)=\phi(A)y\otimes y$ for some linear functional $\phi\colon X^{(2)}\to \R$ and $y\in X$. Let $x\in X_+$. Since $T[\calD_+]\subseteq \calD_+$, it follows that $T(x\otimes x)=\phi(x\otimes x) y\otimes y\in \calD_+$, and so $y\in X_+\cup(-X_+)$ and $\phi(x\otimes x)\geq 0$. Thus, we get $B:=y\otimes y\in \calD_+$ and, since $x$ is arbitrary, we get that $\phi$ is positive on $\calD_+$.
\end{proof}

\begin{note}
    In the proof above, we first showed that if $T$ preserves $\calD_+$, then it also preserves $\calD$. This idea is motivated by the proof of \cite[Lemma 3.1]{Beasley2019-sp}, where the particular situation of $X:=\R^n$ and $X_+:=\R^n_+$ is considered.
\end{note}

Next, we give a characterization of linear maps $f\colon X\to X$ satisfying the hypothesis (i) of Theorem \ref{thm::Lim_2.0}, viz., $f(x)\in X_+\cup (-X_+)$, for every $x\in X_+$. We state and prove an intermediate lemma.

\begin{lemma}\label{lemma::boundary_lemma}
    Let $A\subseteq X$ be a closed and convex subset in a normed vector space $X$. Let $u\in A$ and $v\notin A$. Consider the map $g\colon [0,1]\to X$, defined as $g(t)=(1-t)u+tv$ for $t\in [0,1]$. Then there exists $c\in [0,1)$ such that $g(t)\in A$ if and only if $t\in [0,c].$
\end{lemma}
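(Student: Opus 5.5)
The plan is to study the preimage set $S:=\{t\in[0,1] \mid g(t)\in A\}$ and show that it is a closed interval containing $0$ but not $1$. Then $c:=\max S$ does the job.

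First, $g$ is continuous, since $\|g(t)-g(s)\|=|t-s|\,\|v-u\|$. Because $A$ is closed, $S=g^{-1}(A)$ is a closed subset of $[0,1]$, hence compact. We also have $0\in S$ because $g(0)=u\in A$, and $1\notin S$ because $g(1)=v\notin A$. So $S$ is nonempty and compact, and $c:=\sup S=\max S$ exists, lies in $S$, and satisfies $c<1$; thus $c\in[0,1)$.

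Second, we show that $S$ is an interval. If $t\in S$ and $0\le s\le t$, we show $s\in S$. When $t=0$ this is trivial. Otherwise write $s=(1-\lambda)\cdot 0+\lambda t$ with $\lambda=s/t\in[0,1]$. Since $g$ is affine, $g(s)=(1-\lambda)g(0)+\lambda g(t)=(1-\lambda)u+\lambda g(t)$. This is a convex combination of two points of $A$, so it lies in $A$ by convexity. Applying this with $t=c$ gives $[0,c]\subseteq S$, and $S\subseteq[0,c]$ holds by the definition of $c$ as the maximum. Therefore $g(t)\in A$ if and only if $t\in[0,c]$.

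There is no real obstacle here. The only points needing care are verifying the affine identity $g(\lambda t)=(1-\lambda)g(0)+\lambda g(t)$, which is immediate from the formula for $g$, and using closedness to ensure the supremum is attained. Attainment is what makes the interval closed at $c$, and the fact that $1\notin S$ is what forces $c<1$.
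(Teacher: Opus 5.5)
Your proof is correct and is essentially the paper's argument: continuity of $g$ plus closedness of $A$ locate the endpoint $c$, and convexity with $u\in A$ shows that the preimage is an initial segment of $[0,1]$. The only difference is cosmetic. You take $c=\max g^{-1}(A)$ over a compact set, while the paper takes $c=\inf g^{-1}(A^c)$ over a relatively open set, so in your version the fact that $g(c)\in A$ is automatic rather than left implicit.
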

\begin{proof}
    First, observe that $g$ is continuous. Set $c:=\inf g^{-1}(A^c)$. Clearly, $g([0,c])\subseteq A$. Since $A^c$ is an open set, $g^{-1}(A^c)$ is open. Note that $1 \in g^{-1}(A^c).$ Thus $(\delta,1] \subseteq g^{-1}(A^c)$ for some $\delta>0$. Hence, $c<1$. Notice that, for any $t\in (0,1)$ and $\lambda\in (0, t)$, $g(\lambda)=(1-\frac{\lambda}{t})u+\frac{\lambda}{t}g(t)$. Since $A$ is convex and $u\in A,$ if $g(t)\in A$, then $g((0,t))\subseteq A$. Thus, we have shown that $g((c,1])\subseteq A^c$, completing the proof.
\end{proof}
\begin{remark}\label{rem::boundary-lemma}
    Observe that, in Lemma~\ref{lemma::boundary_lemma}, $g$ is a directed line segment from $u$ to $v$. If we change the orientation of $g$, i.e., define $g(t):=tu + (1-t)v$ for $t\in [0,1]$, then there exists $c\in (0,1]$ such that $g(t)\in A$ if and only $t\in [c,1]$.
\end{remark}

\begin{theorem}\label{thm::prop(U)_char}
    Let $X$ be a normed vector space and $X_+\subseteq X$ be a closed and generating cone. A linear map $f\colon X\to X$ satisfies $f[X_+]\subseteq X_+\cup(-X_+)$ if and only if one of the following holds:
    \begin{enumerate}[label=\upshape(\roman*)]
        \item\label{unisignes:i}
            $f\in \pi(X_+)\cup (-\pi(X_+))$ 
        \item 
            there exists $u\in X_+\cup(-X_+)$ and $\psi\in X'$ such that $f(x)=\psi(x) u$, for every $x\in X$.
    \end{enumerate}
\end{theorem}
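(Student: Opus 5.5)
Sufficiency is immediate: if $f\in\pi(X_+)$ then $f[X_+]\subseteq X_+$; if $f\in -\pi(X_+)$ then $f[X_+]\subseteq -X_+$; and if $f=\psi(\cdot)u$ with $u\in\pm X_+$, then $f(x)$ is a real multiple of $u$, hence lies in $X_+\cup(-X_+)$. For necessity, assume $f[X_+]\subseteq X_+\cup(-X_+)$. Split $X_+$ into $P:=\{x\in X_+: f(x)\in X_+\}$ and $N:=\{x\in X_+: f(x)\in -X_+\}$. Both are closed under addition within themselves and under nonnegative scalars, and $P\cup N=X_+$. Since $X_+$ is closed, $X_+\cap f^{-1}(X_+)$ is convex, though I would avoid needing closedness of $f^{-1}(X_+)$ since $f$ need not be continuous. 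If $P=X_+$ or $N=X_+$ we are in case \ref{unisignes:i}. So assume there are $u\in X_+$ with $f(u)\in X_+\setminus(-X_+)$ and $v\in X_+$ with $f(v)\in(-X_+)\setminus X_+$; in particular $f(u)\ne 0\ne f(v)$.

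The key step is to show that $f(u)$ and $f(v)$ are linearly dependent. Consider the segment $f((1-t)u+tv)=(1-t)f(u)+tf(v)$, $t\in[0,1]$. This is a segment in $X$ from $a:=f(u)\in X_+$ to $b:=f(v)\notin X_+$, and every point on it lies in $X_+\cup(-X_+)$, since $(1-t)u+tv\in X_+$. By Lemma~\ref{lemma::boundary_lemma} applied with $A=X_+$, there is $c\in[0,1)$ such that the point lies in $X_+$ exactly for $t\in[0,c]$. By Remark~\ref{rem::boundary-lemma} applied with $A=-X_+$ (note $a\notin -X_+$), there is $c'\in(0,1]$ such that the point lies in $-X_+$ exactly for $t\in[c',1]$. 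Covering $[0,1]$ forces $c'\le c$. Then for $t_0=c$ the point $(1-c)a+cb$ lies in $X_+\cap(-X_+)=\{0\}$. Since $c<1$, this gives $b=-\frac{1-c}{c}\,a$. Here $c>0$, because $a\neq 0$ rules out $c=0$. Hence $f(v)\in\R f(u)$, and by symmetry every element of $f[N]$ is a multiple of $a$.

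Next I would show $f[P]\subseteq \R a$ as well. Take any $w\in P$ with $f(w)\ne0$. If $f(w)\in -X_+$ too, then $f(w)=0$, which is excluded, so $f(w)\notin -X_+$. Pairing $w$ with the fixed $v$ gives, by the argument above, $f(v)\in\R f(w)$; since $f(v)$ is a nonzero multiple of $a$, we get $f(w)\in\R a$. The same pairing argument handles $N$ using the fixed $u$. Thus $f[X_+]\subseteq\R a$, and because $X_+$ is generating and $f$ is linear, $f[X]\subseteq\R a$. Now define $\psi\in X'$ by $f(x)=\psi(x)a$; this is well defined and linear since $a\ne0$. Setting the vector in (ii) to be $a\in X_+$ gives case (ii).

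The main obstacle is the segment step. It needs the intersection argument to produce a single parameter at which the segment point lies in both $X_+$ and $-X_+$, and the endpoint conditions $a\notin -X_+$, $b\notin X_+$ are exactly what make Lemma~\ref{lemma::boundary_lemma} and Remark~\ref{rem::boundary-lemma} applicable. Closedness of $X_+$ is used only there, in $X$ itself, so no continuity of $f$ is required.
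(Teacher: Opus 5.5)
Your proof is correct and follows essentially the same route as the paper. In both, the decisive step is the segment argument: the image of a segment in $X_+$ joining a vector with positive image to one with negative image stays in $X_+\cup(-X_+)$, so by Lemma~\ref{lemma::boundary_lemma} and Remark~\ref{rem::boundary-lemma} it must pass through $X_+\cap(-X_+)=\{0\}$, which forces the two images to be linearly dependent. The only difference is organization: the paper argues by contradiction through a Hamel basis $\calB\subseteq X_+$, choosing $z_1\in\calB_1$, $z_2\in\calB_2$ with independent images, whereas you pair each vector of $X_+$ directly with a fixed opposite-sign vector, which avoids the basis and the case split.
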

\begin{proof}
    Sufficiency is easily verified. Suppose that $f$ satisfies the assumption and $f\notin \pi(X_+)\cup (-\pi(X_+))$. Hence there exist $x,y\in X_+$ such that $0\neq f(x)\in X_+$ and $0\neq f(y)\in -X_+$. Then, $x$ and $y$ are linearly independent. For, let $x=\lambda y$ for some $\lambda\in \R$. Since $x,y\in X_+$, we get $\lambda\geq 0$, and thus $f(x)=\lambda f(y)\in X_+\cap (-X_+)=\{0\}$, a contradiction. Since $X_+$ is generating, we have $X=X_+-X_+=\rm span(X_+).$ Let $\calB\subseteq X_+$ be a Hamel basis of~$X, ~\calB_1:=\{z\in \calB\colon f(z)\in X_+\}$ and $\calB_2:=\{z\in \calB\colon f(z)\in -X_+\}$. Then $x\in \calB_1$ and $y\in \calB_2$. Our claim is that $f$ is a rank one operator. Suppose to the contrary that there exist at least two linearly independent vectors in $P[\calB]$. We claim that there exist $z_1\in \calB_1$ and $z_2\in \calB_2$ such that $\{f(z_1), f(z_2)\}$ is linearly independent. For, if $f(x)$ and $f(y)$ are independent, then this holds. Otherwise, let $u\in \calB$ be such that $Pu$ and $Px$ are independent. If $u\in \calB_1$, then choose $z_1:=u$ and $z_2:=y$. If $u\in \calB_2$, then choose $z_1:=x$ and $z_2:=u$.
    
    Consider the map $g\colon [0,1]\to X$, given by $g(t):=(1-t)z_1+tz_2$ for $t\in [0,1]$. Since $z_1, z_2\in X_+$, $g([0,1])\subseteq X_+$, and so $(f\circ g)([0,1])\subseteq X_+\cup (-X_+)$. Thus, for any $t\in [0,1],$
    \begin{equation}\label{eq::3.1}
        (f\circ g)(t) = (1-t)f(z_1) + t f(z_2)\in X_+\cup(-X_+).
    \end{equation}
     Also, $(f\circ g)(0)=f(z_1)\in X_+$ and $(f\circ g)(1)=f(z_2)\in -X_+$. So, by  Lemma~\ref{lemma::boundary_lemma} and Remark~\ref{rem::boundary-lemma}, there exist $c_1\in [0,1)$ and $c_2\in (0,1]$ such that $(f\circ g)(t)\in X_+$ if and only if $t\in [0,c_1]$ and $(f\circ g)(t)\in -X_+$ if and only if $t\in [c_2,1]$. Suppose that $[0,c_1]\cap [c_2, 1]\neq \emptyset$ and let $c\in [0,c_1]\cap [c_2, 1]$. Then $f\circ g(c)\in X_+\cap (-X_+)=\{0\}$, implying that $\{f(z_1), f(z_2)\}$ is linearly dependent, a contradiction. Thus, $[0,c_1]\cap [c_2, 1]=\emptyset$ and so $c_1<c_2$. This, in turn, implies that $f\circ g(t)\notin X_+\cup (-X_+)$ for all $t\in (c_1,c_2)$, a  contradiction to \eqref{eq::3.1}. Therefore, $f$ is a rank one operator, and hence there exists $\psi\in X'$ and $u\in X$ such that $f(x)=\psi(x) u$ for every $x\in X.$ It is clear that $u\in X_+\cup(-X_+)$. This completes the proof.
\end{proof}

\section{The group of automorphisms of the completely positive cone}\label{Sec::Aut(CP)}

Let $X$ be a partially ordered normed vector space with a closed and generating cone $X_+.$ Consider the cone 
\[
    CP(X_+):=\Set{\sum_{i=1}^n u_i\otimes u_i\ |\ u_i\in X_+,~ n\in \N}\subseteq X^{(2)}_+,
\]
known as the {\it completely positive cone}. In this section, applying Theorem \ref{thm::Lim_2.0}, we provide a characterization of $Aut(CP(X_+))$.


\begin{proposition}\label{ext-CP-K}
    $Ext(CP(X_+))=\{u\otimes u\ |\ u\in X_+\}$.
\end{proposition}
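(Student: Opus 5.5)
The plan is to prove the two inclusions separately. For ``$\supseteq$'' I will test the tensor equation $u\otimes u=\sum v_i\otimes v_i+\sum w_j\otimes w_j$ against functionals of the form $\psi\otimes\psi$, which turns it into a sum of squares of reals. For ``$\subseteq$'' I will use the extremality definition directly on one summand of a CP-representation. Throughout, the order on $X^{(2)}$ is the one induced by the cone $CP(X_+)$.

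\emph{Step 1: $u\otimes u$ is extremal for $u\in X_+$.} The case $u=0$ is trivial, so assume $u\neq 0$. Let $0\le z\le u\otimes u$ in the $CP(X_+)$-order. Then $z=\sum_{i}v_i\otimes v_i$ and $u\otimes u-z=\sum_j w_j\otimes w_j$ with $v_i,w_j\in X_+$, so
\[
u\otimes u=\sum_i v_i\otimes v_i+\sum_j w_j\otimes w_j .
\]
Take any $\psi\in X'$ with $\psi(u)=0$. Applying the linear functional $\psi\otimes\psi$ on $X\otimes X$ (defined on simple tensors by $x\otimes y\mapsto\psi(x)\psi(y)$) gives
\[
0=\sum_i\psi(v_i)^2+\sum_j\psi(w_j)^2 .
\]
Hence $\psi(v_i)=0$ for every $i$. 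Since this holds for every functional vanishing on $u$, each $v_i$ lies in $\operatorname{span}\{u\}$. (This uses that the annihilator of $\operatorname{span}\{u\}$ separates points outside it, which holds via a Hamel basis extending $u$.)

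Write $v_i=\lambda_i u$. Then $z=\mu\,(u\otimes u)$ with $\mu=\sum_i\lambda_i^2\ge 0$. It remains to show $\mu\le 1$. We have $(1-\mu)(u\otimes u)=u\otimes u-z\in CP(X_+)\subseteq X^{(2)}_+$. If $\mu>1$, then $u\otimes u$ would lie in $X^{(2)}_+\cap(-X^{(2)}_+)=\{0\}$, contradicting $u\neq 0$. So $\mu\in[0,1]$, and $u\otimes u$ is extremal.

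\emph{Step 2: every extremal has the form $u\otimes u$.} Let $x\in Ext(CP(X_+))$. If $x=0$, then $x=0\otimes 0$. Otherwise write $x=\sum_{i=1}^n u_i\otimes u_i$ with $u_i\in X_+$, and choose $u_1\neq 0$ (possible since $x\neq0$). Then $0\le u_1\otimes u_1\le x$, because the remainder $\sum_{i\ge 2}u_i\otimes u_i$ lies in $CP(X_+)$. Extremality gives $u_1\otimes u_1=\lambda x$ with $\lambda\in[0,1]$, and $\lambda>0$ since $u_1\otimes u_1\neq 0$. Thus $x=(\lambda^{-1/2}u_1)\otimes(\lambda^{-1/2}u_1)$ with $\lambda^{-1/2}u_1\in X_+$.

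The only step needing care is in Step 1: recovering $v_i\in\operatorname{span}\{u\}$ from the vanishing of $\psi(v_i)$ for all $\psi$ annihilating $u$, and making sure $\psi\otimes\psi$ is a well-defined functional on the algebraic tensor product. Both are standard consequences of working with a Hamel basis, as in the preliminaries.
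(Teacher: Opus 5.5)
Your proof is correct and follows the same approach as the paper: the key step, showing $u\otimes u$ is extremal by applying the functional $\psi\otimes\psi$ to force each $v_i\in\operatorname{span}\{u\}$, is exactly the paper's sum-of-squares argument. You also fill in two points the paper leaves implicit, namely the check that the resulting multiple $\mu$ lies in $[0,1]$ and the forward inclusion, which the paper calls straightforward.
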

\begin{proof}
    The forward inclusion is straightforward. Let $u\in X_+$ and $A\in CP(X_+)$ such that  $0\leq A\leq u\otimes u$. Then $u\otimes u -A\in CP(X_+)$, so that there exist $u_1,\ldots,u_n\in X_+$ such that $u\otimes u = A + \sum_i u_i\otimes u_i$. Since $A\in CP(X_+)$, without loss of generality, assume that there exists $k\in \N$ and $u_i\in X_+$ such that
    \[
        u\otimes u = \sum_{i=1}^k u_i\otimes u_i.
    \]
    For each $f\in X'$, define $\psi_f\colon X\otimes X\to \R$ as $\psi_f(x\otimes y)=f(x)\otimes f(y)$. Then
    \[
        f(u)^2=\psi_f(u\otimes u)=\psi_f\left(\sum_{i=1}^k u_i\otimes u_i\right)=\sum_{i=1}^k f(u_i)^2.
    \]
     Thus, $f(u)=0$ implies $f(u_i)=0$ for all $i$. Hence, for each $i$, we get $\{u,u_i\}$ is linearly dependent. This proves that $u\otimes u\in Ext(CP(X_+))$.  
\end{proof}

\begin{theorem}\label{thm::Aut-CP}
Let $X$ be a partially ordered normed vector space with a closed and generating cone $X_+$. Consider the space $X^{(2)}$ and the cone $CP(X_+)$. Then the following are equivalent:
    \begin{enumerate}[label=\upshape(\roman*)]
        \item \label{Aut-CP-(i)}
            $T\in Aut(CP(X_+))$,
        \item \label{Aut-Cp-(ii)}
            $T=P_2(f)$ for some $f\in Aut(X_+)$.
    \end{enumerate}
\end{theorem}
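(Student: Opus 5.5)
For the implication \ref{Aut-Cp-(ii)}$\Rightarrow$\ref{Aut-CP-(i)}, assume $f\in Aut(X_+)$. Then $P_2(f)(u\otimes u)=f(u)\otimes f(u)$ with $f(u)\in X_+$, so $P_2(f)$ maps $CP(X_+)$ into itself by linearity. Since $P_2(f)P_2(f^{-1})=P_2(ff^{-1})=I$ (check on elementary tensors $x\otimes y$ and extend linearly), $P_2(f)$ is invertible with inverse $P_2(f^{-1})$. The same argument shows this inverse is also positive on $CP(X_+)$, so $P_2(f)\in Aut(CP(X_+))$.

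For \ref{Aut-CP-(i)}$\Rightarrow$\ref{Aut-Cp-(ii)}, I will combine Lemma~\ref{lem::inv_image_of_extremal_is_extremal} with Proposition~\ref{ext-CP-K}. Together they give $T[\calD_+]=T[Ext(CP(X_+))]=Ext(CP(X_+))=\calD_+$, and in particular $T[\calD_+]\subseteq\calD_+$. Theorem~\ref{thm::Lim_2.0} then says that either $T=P_2(f)$ with $f[X_+]\subseteq X_+\cup(-X_+)$, or $T=\phi(\cdot)B$. The second alternative has rank at most one. It is excluded unless $X^{(2)}$ is one-dimensional, which happens only when $\dim X=1$; that trivial case I will handle by hand, noting that there $T$ is multiplication by a positive scalar $c$ and $f=\sqrt c\,I$. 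So $T=P_2(f)$. Injectivity of $T$ forces $f$ to be injective, since $f(x)=0$ gives $T(x\otimes x)=0$.

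Next I apply Theorem~\ref{thm::prop(U)_char} to $f$, using that $X_+$ is closed and generating. If $f$ were of the form $\psi(\cdot)u$, then $T$ would have rank at most one, contradicting bijectivity when $\dim X\ge 2$. Hence $f\in\pi(X_+)\cup(-\pi(X_+))$. Since $P_2(-f)=P_2(f)$, I may replace $f$ by $-f$ and assume $f\in\pi(X_+)$.

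It remains to show that $f$ is surjective and that $f^{-1}$ is positive. For surjectivity, $T$ is onto $X^{(2)}$, so each $x\otimes x$ lies in the range of $P_2(f)$, which is contained in $f(X)^{(2)}$. I will argue, for instance with a functional $\phi$ vanishing on $f(X)$ but not at $x$, that $x\in f(X)$. For positivity of $f^{-1}$, take $v\in X_+$. Then $v\otimes v\in\calD_+=T[\calD_+]$, so $v\otimes v=f(u)\otimes f(u)$ for some $u\in X_+$, and hence $f(u)=\pm v$. Thus $f^{-1}(v)=\pm u$ with $u\in X_+$. This sign ambiguity is the main obstacle. I will resolve it by noting that $f^{-1}$ maps $X_+$ into $X_+\cup(-X_+)$ and applying Theorem~\ref{thm::prop(U)_char} to $f^{-1}$, which is not of rank one. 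This gives $f^{-1}\in\pi(X_+)\cup(-\pi(X_+))$. If $f^{-1}\in-\pi(X_+)$, then for $0\ne v\in X_+$ we get $v=f(f^{-1}v)\in -X_+$, contradicting $X_+\cap(-X_+)=\{0\}$. Therefore $f^{-1}$ is positive and $f\in Aut(X_+)$.
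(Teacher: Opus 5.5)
Your proof is correct and follows the same route as the paper: you use $Ext(CP(X_+))=\calD_+$ to get $T[\calD_+]=\calD_+$, invoke Theorem~\ref{thm::Lim_2.0} to write $T=P_2(f)$ with $f[X_+]\subseteq X_+\cup(-X_+)$, then apply Theorem~\ref{thm::prop(U)_char} to $f$ and $f^{-1}$ and normalize signs via $P_2(-f)=P_2(f)$. You also supply details the paper passes over quickly: excluding the rank-one alternative by bijectivity (treating $\dim X=1$ by hand), proving surjectivity of $f$, and showing why $f$ and $f^{-1}$ must carry the same sign.
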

\begin{proof}
    The implication \ref{Aut-Cp-(ii)} $\implies$ \ref{Aut-CP-(i)} is easily verified. Let $T\in Aut(CP(X_+)).$ Notice that, by Proposition~\ref{ext-CP-K}, $Ext(CP(X_+))=\calD_+$. Thus, by Lemma~\ref{lem::inv_image_of_extremal_is_extremal}, both $T$ and $T^{-1}$ preserve the set $\calD_+$. Hence, by Theorem \ref{thm::Lim_2.0}, there exists $f\colon X\to X$ such that $f[X_+]\subseteq X_+\cup (-X_+)$ and $T=P_2(f).$ Bijectivity of $T$ implies bijectivity of $f$. By Theorem \ref{thm::prop(U)_char}, it follows that $f, f^{-1}\in \pi(X_+)\cup (-\pi(X_+))$. Since $ff^{-1}$ is identity, without loss of generality, assume that both $f, f^{-1}\in \pi(X_+)$, so that $f\in Aut(X_+).$ This proves \ref{Aut-CP-(i)} $\implies$ \ref{Aut-Cp-(ii)}.
\end{proof}

Recall that $M_n$ denotes the set of all $n\times n$ real matrices and $S^n$ denotes the set of all $n\times n$ real symmetric matrices. Also, we say that $A\in S^n$ is {\it copositive} if $\langle Ax,x\rangle\geq 0$ for all $x\in \R^n_+$, and {\it strictly copositive} if $\langle Ax,x\rangle > 0$ for all $x\in \R^n_+\setminus \{0\}$.

\begin{remark}\label{rem::S^n}
    Let $X:=\R^n$. Then $X\otimes X=M_n$ and $X^{(2)}=S^n$. Let $P\colon \R^n\to \R^n$ be a linear map. For $A\in S^n$, let $A=\sum \lambda_i u_iu_i^\top$ for some finitely many $\lambda_i\in \R$ and $u_i\in \R^n$. Hence, here $P_2(P)\colon S^n\to S^n$ is given by 
    \[
        P_2(P)(A)=\sum \lambda_i Pu_i (Pu_i)^\top= \sum  P(\lambda_i u_iu_i^\top) P^\top= P A P^\top.
    \]
    Also, using the Riesz representation theorem, we identify $(S^n)'=S^n$.
\end{remark}
Next, by appealing to Theorem~\ref{thm::Aut-CP}, we obtain the main result of \cite{GOWDA20133862}.

\begin{corollary}[{\cite[Theorem 1]{GOWDA20133862}}]\label{Ex::COP}
    Let $K\subseteq \R^n$ be a closed and generating cone and 
    \[
        CP(K):=\Set{\sum_{i=1}^k u_iu_i^\top\ |\ k\in \N,~u_i\in K}    
    \]
    be the cone of {$K$-completely positive matrices}. Let $T\colon S^n\to S^n$ be a linear map. Then the following are equivalent.
    \begin{enumerate}[label=\upshape(\roman*)]
        \item
            $T\in Aut(CP(K))$,
        \item
            there exists $P\in Aut(K)$ such that $T(A)=PAP^\top$ for every $A\in S^n$.
    \end{enumerate}
\end{corollary}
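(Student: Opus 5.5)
The plan is to specialize Theorem~\ref{thm::Aut-CP} to $X:=\R^n$ with $X_+:=K$, and then translate through the identifications of Remark~\ref{rem::S^n}. First, $\R^n$ with the Euclidean norm is a normed space and $K$ is by hypothesis closed and generating. So $(\R^n,K)$ satisfies the standing assumptions of Theorem~\ref{thm::Aut-CP}. Next, under $X\otimes X=M_n$, with $u\otimes v$ corresponding to $uv^\top$, we have $X^{(2)}=S^n$. The cone $CP(X_+)$ is then exactly the set of sums $\sum_i u_iu_i^\top$ with $u_i\in K$, which is $CP(K)$. Hence $Aut(CP(X_+))$ and $Aut(CP(K))$ are the same set of linear maps on $S^n$.

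With these identifications, Theorem~\ref{thm::Aut-CP} says $T\in Aut(CP(K))$ if and only if $T=P_2(f)$ for some $f\in Aut(K)$. A linear map $f\colon\R^n\to\R^n$ is given by a matrix $P$, and $f\in Aut(K)$ means $P$ is invertible with $P[K]\subseteq K$ and $P^{-1}[K]\subseteq K$, that is, $P\in Aut(K)$. Finally, Remark~\ref{rem::S^n} computes $P_2(P)(A)=PAP^\top$ for every $A\in S^n$. It does so by writing $A=\sum\lambda_iu_iu_i^\top$, which is possible since the $uu^\top$ span $S^n$, and using linearity. This gives (i)$\iff$(ii).

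The only step needing care is checking that the abstract objects really coincide with the concrete ones. This comes down to two facts: the rank-one tensor $u\otimes u$ corresponds to the matrix $uu^\top$, and the abstract symmetric square $P_2(f)$ is the congruence $A\mapsto PAP^\top$. Both are already settled by Remark~\ref{rem::S^n}, so I expect no genuine obstacle. The corollary is a direct translation of Theorem~\ref{thm::Aut-CP}.
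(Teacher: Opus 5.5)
Your proposal is correct and follows the paper's own proof. The paper also obtains the corollary by setting $X:=\R^n$, $X_+:=K$ in Theorem~\ref{thm::Aut-CP} and using Remark~\ref{rem::S^n} to identify $X^{(2)}$ with $S^n$, $CP(X_+)$ with $CP(K)$, and $P_2(P)$ with $A\mapsto PAP^\top$; you simply spell these identifications out in more detail.
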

\begin{proof}
    Using Remark~\ref{rem::S^n}, the result is an immediate consequence of substituting $X:=\R^n$ and $X_+:=K$ in Theorem~\ref{thm::Aut-CP} .
\end{proof}

\section{CP-rank preservers}\label{sec::CP-rank}

Let $X$ be a partially ordered vector space with a cone $X_+$ and $A\in CP(X_+)\setminus \{0\}$. Then the {\it CP-rank} of $A$, denoted by $cp(A)$, is defined as the least integer $k$ such that 
\[
    A=\sum_{i=1}^k u_i\otimes u_i, \quad  u_i\in X_+.
\]
For $k\in \N$, the collection of all matrices of CP-rank $k$ is denoted by $CP_k$. Notice that 
\[
    CP_1=\{x\otimes x\mid x\in X_+\setminus\{0\}\}=\calD_+\setminus \{0\}.
\]
For example, in the particular case of $X:=\R^n$ and $X_+:=\R^n_+$ we know $X^{(2)}=S^n$. Here, a non-zero matrix $A\in S^n$ has CP-rank-$k$ if $k$ is the least integer such that $A=BB^\top$ with $B\in \R^{n\times k}_+$ and $k\in \N$. 
In \cite{Beasley2019-sp}, the authors study linear maps $T\colon S^n\to S^n$ that preserve the set $CP_1$ for the standard cone $\R^n_+$. In the next result, we prove a generalization to the second symmetric space $X^{(2)}$.

\begin{theorem}\label{Thm::CP-rank}
    Let $X$ be a ordered normed vector space with a closed and generating cone $X_+$. Let $T\colon X^{(2)}\to X^{(2)}$ be a linear map. Then $T$ preserves $CP_1$ if and only if either of the following holds:
    \begin{enumerate}[label=\upshape(\roman*)]
        \item 
            $T=P_2(f)$ for some linear map $f\in \pi(X_+)$ such that $f(x)\neq 0$ for every $x\in X_+\setminus\{0\}$.
        \item 
            $T(\cdot)=\phi(\cdot) B$ for some $B\in CP_1$ and $\phi\in (X^{(2)})'$ such that $\phi[\calD_+\setminus\{0\}]\subseteq (0,\infty)$.
    \end{enumerate}
\end{theorem}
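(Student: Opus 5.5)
Since $CP_1=\calD_+\setminus\{0\}$, a map $T$ preserves $CP_1$ exactly when $T[\calD_+]\subseteq\calD_+$ (by linearity $T(0)=0$) and in addition $T(x\otimes x)\neq 0$ for every $x\in X_+\setminus\{0\}$. So the plan is to apply Theorem~\ref{thm::Lim_2.0} and Theorem~\ref{thm::prop(U)_char}, and then use the nonvanishing condition to remove the degenerate cases.

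\emph{Sufficiency.} Suppose (i) holds and $x\in X_+\setminus\{0\}$. Then $T(x\otimes x)=f(x)\otimes f(x)$ with $0\neq f(x)\in X_+$, and this lies in $CP_1$. Suppose (ii) holds. Then $T(x\otimes x)=\phi(x\otimes x)B$ is a positive multiple of $B\in CP_1$, hence again lies in $CP_1$.

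\emph{Necessity.} Assume $T$ preserves $CP_1$, so $T[\calD_+]\subseteq\calD_+$. Theorem~\ref{thm::Lim_2.0} gives two cases.

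In case \ref{(ii)} of that theorem, $T=\phi(\cdot)B$ with $B\in\calD_+$ and $\phi\ge0$ on $\calD_+$. Since $X_+$ is generating it is nonzero, so we can pick some $x\in X_+\setminus\{0\}$. Then $0\neq T(x\otimes x)=\phi(x\otimes x)B$, which forces $B\neq0$, so $B\in CP_1$. For any $x\in X_+\setminus\{0\}$ the same equation forces $\phi(x\otimes x)\neq0$, hence $\phi(x\otimes x)>0$. This is (ii).

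In case \ref{(i)}, $T=P_2(f)$ with $f[X_+]\subseteq X_+\cup(-X_+)$. Nonvanishing of $T(x\otimes x)=f(x)\otimes f(x)$ gives $f(x)\neq0$ for all $x\in X_+\setminus\{0\}$. Theorem~\ref{thm::prop(U)_char} applies because $X_+$ is closed and generating in a normed space. It gives two subcases.
\begin{itemize}
\item If $f\in\pi(X_+)\cup(-\pi(X_+))$, we use $P_2(-f)=P_2(f)$ to replace $f$ by $-f$ if needed. This gives (i).
\item If $f=\psi(\cdot)u$ with $u\in X_+\cup(-X_+)$, then $T(A)=\tilde\phi(A)\,u\otimes u$, where $\tilde\phi$ is the linear functional on $X^{(2)}$ determined by $\tilde\phi(x\otimes y)=\psi(x)\psi(y)$. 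It is well defined by the universal property of the tensor product, restricted to $X^{(2)}$. We have $u\neq0$ and $\tilde\phi(x\otimes x)=\psi(x)^2>0$ on $\calD_+\setminus\{0\}$, and $u\otimes u\in CP_1$. So this falls under (ii). Alternatively, if $\psi$ is positive or negative on $X_+$, it already fits (i) after a sign change.
\end{itemize}

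The main obstacle is this last subcase: a rank-one $f$ that is not sign-definite on $X_+$ is not positive, yet $P_2(f)$ still preserves $CP_1$. Since the statement says only "either of the following holds", it suffices to show that such a $T$ is of form (ii), which the construction of $\tilde\phi$ does. The remaining steps are routine bookkeeping with signs and nonvanishing.
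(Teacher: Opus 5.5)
Your proof is correct, and it differs from the paper's only in the rank-one subcase $f=\psi(\cdot)u$ coming from Theorem~\ref{thm::prop(U)_char}.

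\textbf{How the paper handles it.} The paper sends this subcase to form (i). Suppose $\psi$ took both signs on $X_+$, say $\psi(x_1)>0>\psi(x_2)$ with $x_1,x_2\in X_+$. Then $x:=x_1/\psi(x_1)-x_2/\psi(x_2)$ is a nonzero element of $X_+$ with $\psi(x)=0$. This gives $T(x\otimes x)=0$, which contradicts preservation of $CP_1$. Hence $\psi$ is sign-definite on $X_+$, so $f\in\pi(X_+)\cup(-\pi(X_+))$, and the argument reduces to the first subcase.

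\textbf{How you handle it.} You rewrite $P_2(\psi(\cdot)u)$ as $(\psi\otimes\psi)(\cdot)\,u\otimes u$ and land in form (ii). This uses only that $\psi(x)\neq 0$ on $X_+\setminus\{0\}$, which follows from nonvanishing of $f$ there. The step is valid and avoids the sign argument. It simply exploits the overlap between forms (i) and (ii), which is harmless because the statement is an ``either/or''.

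\textbf{What each route buys.} The paper's route gives extra structural information: any rank-one $f$ arising here is automatically positive up to sign. That same computation shows your closing remark is wrong. There is no rank-one $f$ that fails to be sign-definite on $X_+$ while $P_2(f)$ preserves $CP_1$, because nonvanishing of $\psi$ on $X_+\setminus\{0\}$ forces sign-definiteness. So the ``main obstacle'' you describe is a vacuous case. This does not damage your argument, which covers the subcase uniformly, but you should correct the commentary.
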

\begin{proof}
    Sufficiency is easily verified. Let $T$ preserves $CP_1$. Since $CP_1=\calD_+\setminus\{0\}$, it follows that $T[\calD_+]\subseteq \calD_+$. Hence Theorem~\ref{thm::Lim_2.0} applies. 
    
    Let $T$ be of type (i) in Theorem~\ref{thm::Lim_2.0}, so that $T=P_2(f)$ with $f\colon X\to X$ satisfying the property $f[X_+]\subseteq X_+\cup(-X_+)$. By Theorem~\ref{thm::prop(U)_char}, there are two possibilities for $f.$ Let $f$ be of form (i) in Theorem~\ref{thm::prop(U)_char}, i.e. $f\in \pi(X_+)\cup (-\pi(X_+))$. Since $P_2(f)=P_2(-f)$, without loss of generality, we assume that $f\in \pi(X_+).$ If $f(x)=0$ for some $x\in X_+\setminus\{0\}$, then $T(x\otimes x)=f(x)\otimes f(x)=0$, a contradiction to the fact that $T[CP_1]\subseteq CP_1.$ Hence $f(x)\neq 0$ for every $x\in X_+\setminus \{0\}$. Suppose $f$ is of type (ii) in  Theorem~\ref{thm::prop(U)_char}, i.e., there exists $u\in X_+\cup (-X_+)$ and $\psi\in X'$ such that $f(x)=\psi(x) u$ for $x\in X$. If $\psi\in X_+'\cup(-X_+')$, then $f\in \pi(X_+)\cup (-\pi(X_+)$. This is already considered. Hence, suppose $\psi\notin X'_+\cup (-X'_+)$. Then there exist $x_1,x_2\in X_+$ such that $\psi(x_1)>0$ and $\psi(x_2)<0$. Set 
    \[
        x:=\frac{x_1}{\psi(x_1)}- \frac{x_2}{\psi(x_2)}\in X_+\setminus\{0\},    
    \]
    then $\psi(x)=0$. Hence $T(x\otimes x)=f(x)\otimes f(x)=0$, a contradiction to $T[CP_1]\subseteq CP_1$.

    Let $T$ be of type (ii) in Theorem~\ref{thm::Lim_2.0}, so that, for $A\in X^{(2)}$, we have $T(A)=\phi(A) B$ for some $B\in \calD_+$ and $\phi\in (X^{(2)})'$ satisfying $\phi[\calD_+]\subseteq [0,\infty).$  Clearly, $B\neq 0$, and hence $B\in CP_1.$  If $\phi(A)=0$ for some $A\in \calD_+\setminus\{0\}$, then $T(A)=0$. A contradiction to $T[CP_1]\subseteq CP_1$. This proves the result.
\end{proof}

Next, by appealing to Theorem~\ref{Thm::CP-rank}, we obtain \cite[Theorem 3.1]{Beasley2019-sp} as a corollary.

\begin{corollary}\label{cor::Cp-rank-1}
    Let $T\colon S^n\to S^n$ be a linear map. Then $T$ preserves $CP_1$ (w.r.to $\R^n_+$) if and only if either: $T(A)=\tr(AU)B$ for some strictly copositive matrix $U$ and some completely positive matrix $B$ of CP-rank $1$; or there exists an entry-wise nonnegative matrix $P\in M_n(\R)$ with no zero column such that $T(A)=PAP^\top$ for all $A\in S^n$.
\end{corollary}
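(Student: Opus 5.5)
The plan is to specialize Theorem~\ref{Thm::CP-rank} to $X:=\R^n$ with $X_+:=\R^n_+$, which is closed and generating in the normed space $\R^n$, and then translate the two cases into matrix language using Remark~\ref{rem::S^n}. Here $X^{(2)}=S^n$, $P_2(P)(A)=PAP^\top$, and $\calD_+=\{xx^\top\mid x\in\R^n_+\}$. Every linear functional on $S^n$ is of the form $A\mapsto \tr(AU)$ for a unique $U\in S^n$. Sufficiency can be checked directly: if $P\ge 0$ has no zero column and $x\in\R^n_+\setminus\{0\}$, then $Px\ge 0$ and $Px\ne 0$, so $Pxx^\top P^\top=(Px)(Px)^\top\in CP_1$. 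Likewise, $\tr(xx^\top U)=\langle Ux,x\rangle>0$ for $x\in\R^n_+\setminus\{0\}$ when $U$ is strictly copositive.

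For necessity, I would take the two cases of Theorem~\ref{Thm::CP-rank} in turn.

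In case (ii), $T(A)=\phi(A)B$ with $B\in CP_1$, which is a completely positive matrix of CP-rank $1$. Write $\phi(A)=\tr(AU)$. The condition $\phi[\calD_+\setminus\{0\}]\subseteq(0,\infty)$ reads $\langle Ux,x\rangle=\tr(xx^\top U)>0$ for all $x\in\R^n_+\setminus\{0\}$, so $U$ is strictly copositive.

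In case (i), $T=P_2(f)$ with $f=P\in\pi(\R^n_+)$ and $f(x)\ne 0$ for every $x\in\R^n_+\setminus\{0\}$. Positivity with respect to $\R^n_+$ means $Pe_j\ge 0$ for each $j$, so $P$ is entrywise nonnegative. Applying the nonvanishing condition to $x=e_j$ gives $Pe_j\ne0$, so $P$ has no zero column. Remark~\ref{rem::S^n} then gives $T(A)=PAP^\top$.

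The only step needing care is the identification of $(S^n)'$ with $S^n$ through the trace inner product, which the paper already adopts in Remark~\ref{rem::S^n}. Everything else is a direct dictionary translation, so I expect no real obstacle.
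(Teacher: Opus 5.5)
Your proposal is correct and follows the paper's proof: specialize Theorem~\ref{Thm::CP-rank} to $X=\R^n$, $X_+=\R^n_+$, and identify $(S^n)'$ with $S^n$ via the trace and $P_2(P)$ with $A\mapsto PAP^\top$ as in Remark~\ref{rem::S^n}. Then the conditions on $f$ and $\phi$ become ``entrywise nonnegative with no zero column'' and ``strictly copositive''; you also justify both directions of the equivalence ``$Px\neq 0$ on $\R^n_+\setminus\{0\}$ iff $P$ has no zero column'', which the paper only asserts.
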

\begin{proof}
      Notice that, $\pi(\R^n_+)$ is the set of all entry-wise nonnegative matrices and, for $P\in \pi(\R^n_+)$, we have $P(x)\neq 0$ for all $x\in \R^n_+\setminus\{0\}$ if and only if $P$ has no zero column. Thus, by substituting $X:=\R^n$ and $X_+:=\R^n_+$ in Theorem~\ref{Thm::CP-rank} and using Remark~\ref{rem::S^n}, it follows that $T$ preserves $CP_1$ if and only if, for every $A\in S^n,$ either of the following holds:
    \begin{enumerate}[label=\upshape(\roman*)]
        \item 
            $T(A)=PAP^\top$, where $P$ is an entry-wise nonnegative matrix with no zero column.
        \item   
            $T(A)=\tr(AU) B$, where $B\in CP_1$ and $U\in S^n$ such that 
            \[
                \tr(xx^\top U)=x^\top Ux>0, \quad \forall x\in \R^n_+\setminus\{0\}.
            \]
    \end{enumerate}  
    This proves the result.
\end{proof}

\section{Generalized inverses of rank one non-increasing maps}\label{sec::gen-inve}

\subsection{Moore-Penrose inverse}

Let $(X, \langle \cdot,\cdot\rangle)$ be a Hilbert space. Then, the completion of $X\otimes X$, with the inner product given by $\langle x\otimes y,u\otimes v\rangle=\langle x,u\rangle \langle y,v\rangle$, is a Hilbert space. Without loss of generality, we consider $X\otimes X$ as a Hilbert space. For $S\subseteq X\otimes X$, $\overline{S}$ denotes the topological closure of $S.$ Note that, here 
\[
    X\otimes X=\overline{\text{span}\{x\otimes y\colon x,y\in X\}}.    
\]
Same as earlier, here $X^{(2)}:=\{A\in X\otimes X\colon \sigma(A)=A\}$, where $\sigma\colon X\otimes X\to X\otimes X$ is defined as $\sigma(x\otimes y)=y\otimes x$, for every $x,y\in X$, and extended linearly. Since $\sigma$ is continuous, $X^{(2)}$ is closed and, so, a Hilbert space. We start with an example showing that the Moore-Penrose inverse of a rank one non-increasing map does not share that property, in general.

\begin{example}\label{ex::counter-ex-MP-inverses}
    Let $X:=\R^n$, and so $X^{(2)}=S^n$. Consider the map $T\colon S^n\to S^n$ given as
    \[
        T(A)=\langle A, I\rangle ee^\top = \tr(A) ee^\top,\quad \forall A\in S^n,
    \]
    where $I\in S^n$ denotes the identity matrix. It is clear that $T$ is rank one non-increasing. It can be easily verified that $T^\dagger$ is given by
    \[
        T^\dagger(A)=\frac{1}{n^3}(e^\top A e) I, \quad \forall A\in S^n.
    \]
     Since $\Range(T^\dagger)=\text{span}(I)$, it follows that $T^\dagger$ is not rank one non-increasing.
\end{example}

In the next two results, we provide a characterization for linear maps $T$ such that both $T$ and $T^\dagger$ are rank one non-increasing.

\begin{proposition}\label{P-2(f)-MP}
    Let $T\colon X^{(2)}\to X^{(2)}$ be a linear map given by $T=cP_2(f)$. Then $T^\dagger$ exists if and only if $f^\dagger$ exists, and in that case $T^\dagger=\frac{1}{c}P_2(f^\dagger)$. 
\end{proposition}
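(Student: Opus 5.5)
The plan is to work in the Hilbert space setting of this subsection, with $f\in\calB(X)$ (so that $P_2(f)$ extends to a bounded operator on $X^{(2)}$) and $c\neq 0$. We use two facts about second symmetric powers.

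The first fact is functoriality: $P_2(gh)=P_2(g)P_2(h)$ and $P_2(\lambda g)=\lambda^2P_2(g)$. Both are checked on the spanning set $\{x\otimes x\}$ and extended by linearity and continuity.

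The second fact is compatibility with adjoints: $P_2(f)^*=P_2(f^*)$. This follows from
\[
\langle f(x)\otimes f(x), y\otimes y\rangle=\langle f(x),y\rangle^2=\langle x,f^*(y)\rangle^2=\langle x\otimes x, f^*(y)\otimes f^*(y)\rangle ,
\]
together with polarization and the density of $\operatorname{span}\{x\otimes x\}$ in $X^{(2)}$.

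For sufficiency, suppose $f^\dagger$ exists and set $U:=\frac1c P_2(f^\dagger)$. We verify the four equations \eqref{MP_equations} for $T=cP_2(f)$:
\begin{align*}
TUT&=cP_2(ff^\dagger f)=cP_2(f)=T, & UTU&=\tfrac1c P_2(f^\dagger ff^\dagger)=U,\\
(TU)^*&=P_2((ff^\dagger)^*)=P_2(ff^\dagger)=TU, & (UT)^*&=P_2((f^\dagger f)^*)=UT.
\end{align*}
By uniqueness of the Moore-Penrose inverse, $T^\dagger=U$.

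For necessity, suppose $T^\dagger$ exists. By the remark in the preliminaries, $\Range(T)$ is closed, and it suffices to show that $\Range(f)$ is closed, since then $f^\dagger$ exists by \cite[Theorem 2.2.1]{Groetsch}. We would do this with the bounded-below characterization: a closed range is equivalent to $\|f^*y\|\geq m\|y\|$ for $y\in \Range(f)$ (equivalently, $f$ is bounded below on $\ker(f)^\perp$). Since $\Range(T^*)=\Range(T^{\dagger *})$ is also closed, $T$ is bounded below on $\ker(T)^\perp$, say $\|T(A)\|\geq m\|A\|$ there. Take $x\in\ker(f)^\perp$. Then $x\otimes x$ lies in $\ker(T)^\perp$, because for any $A\in\ker T$,
\[
\langle A,x\otimes x\rangle=\langle A,P_2(f^*)(w\otimes w)\rangle
\]
when $x=f^*w$ with $w$ dense in $\Range(f^*)$; that is, $x\otimes x\in\overline{\Range(T^*)}=\ker(T)^\perp$. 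Applying the bound gives
\[
c\|f(x)\|^2=\|T(x\otimes x)\|\geq m\|x\|^2 .
\]
Hence $f$ is bounded below on $\ker(f)^\perp$, so $\Range(f)$ is closed.

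The main obstacle is this necessity direction, in particular justifying that $x\otimes x\in\ker(T)^\perp$ for $x\in\ker(f)^\perp$. The route I would take is to identify $\ker(T)^\perp=\overline{\Range(P_2(f^*))}$ and approximate $x$ by elements $f^*(w_n)$ with $w_n\in X$. Then $f^*(w_n)\otimes f^*(w_n)\to x\otimes x$ in norm, since $\|a\otimes a-b\otimes b\|\le(\|a\|+\|b\|)\|a-b\|$. Because each $f^*(w_n)\otimes f^*(w_n)$ lies in $\Range(P_2(f^*))$, the limit $x\otimes x$ lies in $\overline{\Range(P_2(f^*))}=\ker(T)^\perp$.
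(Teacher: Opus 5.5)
Your proof is correct. For sufficiency it matches the paper: both verify the four Penrose equations for $\frac1c P_2(f^\dagger)$. You organize it more cleanly through $P_2(gh)=P_2(g)P_2(h)$ and $P_2(f)^*=P_2(f^*)$, whereas the paper checks each identity on elements $x\otimes x$ and skips $(XT)^*=XT$.

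For necessity you take a genuinely different route. The paper argues sequentially: if $f(x_n)\to y$, then $f(x_n)\otimes f(x_n)\to y\otimes y$, and closedness of $\Range(T)$ gives $y\otimes y=T(A)$. It then writes $A=\sum_{i=1}^m\lambda_i(x_i\otimes x_i)$ and uses equality of tensor products to conclude $y\in\operatorname{span}\{f(x_i)\}\subseteq\Range(f)$. That argument is short and elementary. However, the finite expansion of $A$ is only available when $A$ lies in the algebraic symmetric tensor product, not for a general element of the completed space $X^{(2)}$. In infinite dimensions the paper's step therefore needs an extra argument, for instance viewing $A$ as a Hilbert--Schmidt operator so that $y\otimes y=cfAf^*$, and applying this to $y$. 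Your argument avoids this issue and works directly in the completion. It rests on three facts: $T$ is bounded below on $\ker(T)^\perp$; the map $x\mapsto x\otimes x$ sends $\ker(f)^\perp=\overline{\Range(f^*)}$ into $\overline{\Range(P_2(f^*))}=\ker(T)^\perp$; and $|c|\,\|f(x)\|^2=\|T(x\otimes x)\|\geq m\|x\|^2$ there. It also yields the quantitative bound $\|f^\dagger\|^2\leq |c|\,\|T^\dagger\|$.

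A few minor slips should be fixed, though none is a gap.
\begin{itemize}
\item The factor should be $|c|$, not $c$.
\item The identity $\Range(T^*)=\Range(T^{\dagger *})$ is false in general; in fact $\Range(T^\dagger)=\Range(T^*)$ and $\Range(T^{\dagger *})=\Range(T)$. The cleanest justification of the lower bound is $\|A\|=\|T^\dagger T A\|\leq \|T^\dagger\|\,\|T A\|$ for $A\in\ker(T)^\perp$, which gives $m=1/\|T^\dagger\|$.
\item The sentence about ``$w$ dense in $\Range(f^*)$'' is garbled. Your final paragraph, which approximates $x$ by $f^*(w_n)$ and uses $\|a\otimes a-b\otimes b\|\leq(\|a\|+\|b\|)\|a-b\|$, is the correct justification and should replace it.
\end{itemize}
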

\begin{proof}
    Let $T^\dagger$ exist\sout{s}. It is enough to show that $\Range(f)$ is closed. Let $y_n=f(x_n)\in \Range(f)$ be such that $y_n\to y\in X$. First, we show that $y_n\otimes y_n\to y\otimes y$. Consider
    \begin{align*}
        \|y_n\otimes y_n - y\otimes y\|^2 &= \langle y_n\otimes y_n - y\otimes y, y_n\otimes y_n - y\otimes y\rangle\\
        &=\langle y_n\otimes y_n, y_n\otimes y_n\rangle + \langle y\otimes y, y\otimes y\rangle - 2 \langle y_n\otimes y_n, y\otimes y\rangle\\
        &=\|y_n\|^4 + \|y\|^4 - 2 | \langle y_n, y\rangle|^2\xrightarrow[n\to \infty]{} \|y\|^4 + \|y\|^4 - 2 \|y\|^4 =0.
    \end{align*}
    Since $y_n\otimes y_n = \frac{1}{c} T(x_n\otimes x_n)\in \Range(T)$, it follows that $y\otimes y \in \Range(T)$. Thus $y\otimes y= T(A)$ for some $A\in X^{(2)}$. Let $A=\sum_{i=1}^m \lambda_i (x_i\otimes x_i)$. Hence 
    \[
        y\otimes y= c \left(\sum_{i} \lambda_i (f(x_i)\otimes f(x_i))\right)= \sum (\lambda_i f(x_i))\otimes (c f(x_i))
    \]
     and so, by the equality of tensor products, we get $y\in \text{span}\{f(x_i)\mid 1\leq i\leq m\}\subseteq \Range(f)$.
   Next, we claim that $T^\dagger = \frac{1}{c} P_2(f^\dagger).$ The proof is by verification.  Let $X:=\frac{1}{c}P_2(f^\dagger)$. Then
    \[
        TX(x\otimes x)= ff^\dagger (x)\otimes ff^\dagger (x)
    \]
   and so
   \[
        TXT(x\otimes x)= TX(c f(x)\otimes f(x))=c (ff^\dagger f(x)\otimes ff^\dagger f(x))=T(x\otimes x)
   \]
    and 
    \[
        XTX(x\otimes x)=X(ff^\dagger (x)\otimes ff^\dagger (x))=\frac{1}{c}(f^\dagger ff^\dagger(x)\otimes f^\dagger f f^\dagger(x))=X(x\otimes x).
    \]
    Notice that
    \begin{align*}
        \langle TX(x\otimes x), y\otimes y\rangle &= \langle ff^\dagger(x)\otimes ff^\dagger(x), y\otimes y\rangle\\
        &= |\langle ff^\dagger(x), y\rangle|^2=|\langle x, ff^\dagger(y)\rangle|^2\\
        &=\langle x\otimes x, ff^\dagger(y)\otimes ff^\dagger(y)\rangle\\
        &=\langle x\otimes x,TX(y\otimes y)\rangle.
    \end{align*}
    Since $\overline{\text{span}\{x\otimes x\colon x\in X\}}=X^{(2)}$, it follows that $(TX)^*=TX$. A similar proof applies for $(XT)^*=XT$ and is skiped. Thus $T^\dagger=\frac{1}{c}P_2(f^\dagger)$. 
\end{proof}

\begin{theorem}\label{thm::rank-non-increasing-Moore-Penrose-inverse}
    Let $T\colon X^{(2)}\to X^{(2)}$ be a linear rank one non-increasing map. If $T^\dagger$ exists, then $T^\dagger$ is rank one non-increasing if and only if either of the following holds.
    \begin{enumerate}[label=\upshape(\roman*)]
        \item \label{form(i)} 
            $T=cP_2(f)$ for some linear map $f\colon X\to X$ with closed range and $c\in \R\setminus \{0\}$ .
        \item \label{form(ii)} 
            $T(\cdot)=\langle \cdot, U\rangle y\otimes y$ for some $y\in X$ and $U\in \calD$.
    \end{enumerate}
\end{theorem}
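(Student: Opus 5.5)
Since $T$ is rank one non-increasing, Theorem~\ref{thm::Lim} splits the argument into two cases: either $T=cP_2(f)$, or $T(\cdot)=\phi(\cdot)\,y\otimes y$. In the Hilbert space setting the functional $\phi$ should be bounded (because $T$ is, assuming $y\neq 0$), so by the Riesz representation theorem $\phi=\langle\cdot,U\rangle$ for some $U\in X^{(2)}$. The plan is to show that in the first case $T^\dagger$ is automatically rank one non-increasing, and that in the second case this happens exactly when $U\in\calD$. Conversely, one checks that maps of the forms \ref{form(i)} and \ref{form(ii)} have rank one non-increasing Moore--Penrose inverses.

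\textbf{Case $T=cP_2(f)$.} Proposition~\ref{P-2(f)-MP} gives that $T^\dagger$ exists if and only if $f^\dagger$ exists, that is, if and only if $f$ has closed range. In that case $T^\dagger=\frac1c P_2(f^\dagger)$, which is visibly rank one non-increasing. So this case corresponds exactly to \ref{form(i)}, in both directions.

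\textbf{Case $T=\langle\cdot,U\rangle\, y\otimes y$.} If $y=0$ or $U=0$, then $T=0$, so $T^\dagger=0$; this is trivially of form \ref{form(ii)} with $U=0\in\calD$. Otherwise $T$ has rank one and closed range, and I would verify directly from the Penrose equations \eqref{MP_equations} that
\[
T^\dagger(A)=\frac{\langle A,y\otimes y\rangle}{\|U\|^2\|y\|^4}\,U .
\]
This is the standard formula for the Moore--Penrose inverse of a rank one operator $a\otimes b^*$, namely $(a\otimes b^*)^\dagger = b\otimes a^*/(\|a\|^2\|b\|^2)$; here $\|y\otimes y\|=\|y\|^2$. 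Hence $\Range(T^\dagger)=\operatorname{span}(U)$. If $U\in\calD$, every value of $T^\dagger$ is a multiple of $U$ and so lies in $\calD$. Conversely, if $T^\dagger$ is rank one non-increasing, I would evaluate it at $A=y\otimes y\in\calD$; since $\langle y\otimes y,y\otimes y\rangle=\|y\|^4\neq0$, this yields a nonzero multiple of $U$, which must lie in $\calD$, so $U\in\calD$. This matches Example~\ref{ex::counter-ex-MP-inverses}, where $U=I\notin\calD$.

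\textbf{Main obstacle.} The delicate point is converting the algebraic functional $\phi$ in Theorem~\ref{thm::Lim} into an inner product with some $U$. This needs $T$ bounded, which is implicit in the existence of $T^\dagger$ in $\mathcal B(X^{(2)})$. When $y\neq0$, boundedness of $\phi$ follows from $|\phi(A)|\,\|y\|^2=\|T(A)\|$.

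A second subtlety is that Theorem~\ref{thm::Lim} is stated for the algebraic symmetric product, while here $X^{(2)}$ is a completion. I would take the representation on the dense algebraic part and extend it by continuity. For case (i), this requires $f$ to be bounded. I would get this from $\|T(x\otimes x)\|=|c|\,\|f(x)\|^2$ together with $\|x\otimes x\|=\|x\|^2$, which give $\|f(x)\|^2\le \|T\|\,\|x\|^2/|c|$. Once these points are settled, the remaining verifications are routine.
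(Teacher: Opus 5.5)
Your proposal is correct and follows essentially the same route as the paper: a case split via Theorem~\ref{thm::Lim}, Proposition~\ref{P-2(f)-MP} for the $cP_2(f)$ case, and in the other case Riesz representation plus the explicit rank-one Moore--Penrose formula, whose range is the span of $U$. Your extra checks fill in details the paper leaves implicit: boundedness of $\phi$ and $f$, the degenerate case $T=0$, and passing from the algebraic symmetric product to its completion.
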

\begin{proof}
    Let $T$ be of form \ref{form(i)}. Then by Proposition~\ref{P-2(f)-MP}, $$T^\dagger=\frac{1}{c}P_2(f^\dagger).$$ If $T$ is of form \ref{form(ii)}, then, it can be verified that, $T^\dagger$ is given by 
    \[
        T^\dagger(A)=\frac{1}{\|U\|^2 \|y\otimes y\|^2}\langle A, y\otimes y\rangle U,\quad \forall A\in X^{(2)}.
    \]
    In either case, by Theorem~\ref{thm::Lim}, $T^\dagger$ is rank one non-increasing. 
    
    Conversely, suppose $T^\dagger$ is rank one non-increasing. Recall that
    \[
        \{A\in X^{(2)}\mid \rho(A)\leq 1\}=\Set{\lambda(x\otimes x)\ |\ x\in X,\, \lambda\in \R}=\calD.
    \]
    Then, by Theorem~\ref{thm::Lim}, there are only two possibilities for $T$: either $T=cP_2(f)$ for some linear map $f\colon X\to X$ and $c\in \R\setminus \{0\}$ ; or $T(A)=\phi(A) y\otimes y$ for some linear functional $\phi\colon X^{(2)}\to \R$ and $y\in X$. In the former case, by Theorem~\ref{P-2(f)-MP}, $\Range(f)$ is closed. In the latter case, since $X^{(2)}$ is a Hilbert space, by the Riesz representation theorem, there exists $U\in X^{(2)}$ such that $\phi(A)=\langle A, U\rangle$ for all $A\in X^{(2)}$. Thus, $T^\dagger$ is given by $T^\dagger(A)=\frac{1}{\|U\|^2 \|y\otimes y\|^2}\langle A, y\otimes y\rangle U$, for $A\in X^{(2)}$. Hence $\Range(T^\dagger)=\text{span}\{U\}$ and so it follows that $U\in \calD$. This completes the proof.
\end{proof}
Notice that, the operator $T$ in Example~\ref{ex::counter-ex-MP-inverses} is neither of form~\ref{form(i)} nor of form~\ref{form(ii)} of Theorem~\ref{thm::rank-non-increasing-Moore-Penrose-inverse}.

\begin{example}
    Consider the space $\ell^2(\N):=\{x\colon \N\to \R\ |\ \sum x_i^2<\infty\}$ of all square summable real sequences. It is well known (cf. \cite[Example 2.6.11]{kadison1997fundamentals}) that 
    \[
        \ell^2\otimes \ell^2\cong \ell^2(\N\times \N)=\Set{ a\colon \N\times \N \ |\ \sum_{i,j} a(i,j)^2<\infty}
    \]
    with the following identification. For $x,y\in \ell^2$, $x\otimes y\colon \N\times \N\to \R$ is given by $(x\otimes y)(i,j)=x_iy_j$ for all $i,j\in \N$. Observe that each element of $\ell^2(\N\times \N)$ can be thought of as an infinite matrix. Notice that, here 
    \[
        (\ell^2)^{(2)}=\Set{a\in \ell^2(\N\times \N)\ |\ \forall (i,j)\in \N\times \N\colon a(i,j)=a(j,i)}.
    \] In that case, the decomposable elements has the following representation. For $x\in \ell^2,$ 
    \[
        x\otimes x =\begin{bmatrix}
            x_1&x_1&x_1&\ldots\\
            x_2&x_2&x_2&\ldots\\
             \vdots & \vdots& \vdots &\\
             x_n& x_n &x_n&\ldots\\
             \vdots & \vdots& \vdots & \ddots
        \end{bmatrix} \odot_H  \begin{bmatrix}
            x_1&x_2&\ldots & x_n &\ldots\\
            x_1&x_2&\ldots & x_n &\ldots\\
             \vdots & \vdots &\\
             x_1& x_2 &\ldots & x_n &\ldots\\
             \vdots & \vdots & & \vdots& \ddots
        \end{bmatrix},
    \]
    where $\odot_H$ is the Hadamard entry-wise product. 
    Now, consider the right shift operator $f\colon \ell^2\to \ell^2$ given by 
    \[
        f(x_1,x_2,\ldots, x_n, \ldots)=(0, x_1, x_2,\ldots, x_n, \ldots), \quad \forall (x_1,x_2,\ldots, x_n, \ldots)\in \ell^2.
    \]
    Then, $\Range(f)$ is closed and $f^\dagger\colon \ell^2\to \ell^2$ is the left shift operator, given by $f^\dagger(x)=(x_{i+1})_{i\in \N}$. Then $P_2(f)\colon (\ell^2)^{(2)}\to (\ell^2)^{(2)}$ is given by  
    \[
        P_2(f)(x\otimes y)(i,j)=(f(x)\otimes f(y))(i,j)=\begin{cases}
            0,& i=1 ~\text{or}~  j=1,\\
            x_{i-1}y_{j-1}, &\text{otherwise}.
        \end{cases}
    \]
    In the matrix form, the action of $P_2(f)$ on rank one elements is as follows.
    \[
        x\otimes x\longmapsto \begin{bmatrix}
            0&0&0&\ldots\\
            x_1&x_1&x_1&\ldots\\
             \vdots & \vdots& \vdots &\\
             x_n& x_n &x_n&\ldots\\
             \vdots & \vdots& \vdots & \ddots
        \end{bmatrix} \odot_H  \begin{bmatrix}
            0&x_1&x_2&\ldots & x_n &\ldots\\
              0&x_1& x_2 &\ldots & x_n &\ldots\\
                0&x_1& x_2 &\ldots & x_n &\ldots\\
             \vdots&\vdots & \vdots & &\vdots &\ldots\\
             \vdots&\vdots & \vdots & & \vdots& \ddots
        \end{bmatrix}.
    \]
    Thus, one can easily see that $P_2(f)$ preserves the set of all decomposable elements and so it is rank one non-increasing. Also, $P_2(f^\dagger)$ is given by 
    \[
        P_2(f^\dagger)(x\otimes y)(i,j)=(f^\dagger(x)\otimes f^\dagger(y))(i,j)=x_{i+1}y_{j+1},
    \]
    i.e,
    \[
        x\otimes x\longmapsto \begin{bmatrix}
            x_2&x_2&x_2&\ldots\\
            \vdots & \vdots& \vdots &\\
             x_n& x_n &x_n&\ldots\\
             \vdots & \vdots& \vdots & \ddots
        \end{bmatrix} \odot_H  \begin{bmatrix}
            x_2&x_3&\ldots & x_n &\ldots\\
             \vdots &\vdots & & \vdots &\\
             x_2 &x_3&\ldots & x_n &\ldots\\
             \vdots & \vdots& &\vdots & \ddots
        \end{bmatrix}.
    \]
    Thus, $P_2(f^\dagger)$ also preserves decomposable elements, and hence it is rank one non-increasing.
\end{example}

\subsection{Drazin inverse}
Unlike the case of the Moore-Penrose inverse (Theorem~\ref{thm::rank-non-increasing-Moore-Penrose-inverse}), the Drazin inverse (if it exists) is always a preserver, as we show next.

\begin{theorem}\label{thm::inverse-of-rank-non-increasing}
    Let $X$ be a vector space and $X^{(2)}$ the second symmetric product space of $X$. Let $T\colon X^{(2)}\to X^{(2)}$ be a rank one non-increasing map. If $T^D$ exists, then $T^D$ is also rank one non-increasing.
\end{theorem}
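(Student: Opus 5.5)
Since $T$ is rank one non-increasing, Theorem~\ref{thm::Lim} leaves two cases, and in each we compute $T^D$ explicitly and check that it has one of the two forms of Theorem~\ref{thm::Lim}.

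\emph{Case (ii): $T(\cdot)=\phi(\cdot)\,y\otimes y$.} Here $T^2(A)=\phi(A)\phi(y\otimes y)\,y\otimes y$.

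If $\mu:=\phi(y\otimes y)\neq 0$, define $S(A):=\mu^{-2}\phi(A)\,y\otimes y$. One checks directly that
\[
TS=ST=\mu^{-1}\phi(\cdot)\,y\otimes y,\qquad STS=S,\qquad TST=T .
\]
So $T^D=S$ by uniqueness (index at most $1$). This $S$ is again of form (ii).

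If $\mu=0$, then $T^2=0$. Then $S=0$ satisfies \eqref{eq::drazin_prelim} with $k=2$, so $T^D=0\in$ form (ii). In both subcases $T^D[\calD]\subseteq\calD$.

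\emph{Case (i): $T=cP_2(f)$.} The strategy is to show that $f$ is Drazin invertible and that $T^D=c^{-1}P_2(f^D)$.

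Granting that $f^D$ exists with index $k$, the verification is routine using $P_2(g)P_2(h)=P_2(gh)$:
\begin{align*}
T^kST&=c^{k}P_2(f^kf^Df)=c^kP_2(f^k)=T^k,\\
STS&=c^{-1}P_2(f^Dff^D)=S,\\
TS&=P_2(ff^D)=P_2(f^Df)=ST .
\end{align*}
Hence $T^D=c^{-1}P_2(f^D)$, which is of form (i) with constant $c^{-1}$. (If $f^D=0$, this is trivially rank one non-increasing as well.)

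\textbf{Main obstacle: existence of $f^D$ from that of $T^D$.} I would use the characterization $\Range(T^{k+1})=\Range(T^k)$ and $\ker(T^{k+1})=\ker(T^k)$, with $k=\text{Ind}(T)$, and transfer both equalities to $f$.

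\emph{Ranges.} $\Range(T^m)$ is spanned by $f^m(x)\otimes f^m(x)$. Take $u=f^k(x)$. Then $u\otimes u\in\Range(T^{k+1})$, so
\[
u\otimes u=\sum_i\lambda_i\, f^{k+1}(x_i)\otimes f^{k+1}(x_i).
\]
As in the proof of Proposition~\ref{P-2(f)-MP}, apply a functional $g\otimes\mathrm{id}$ with $g(u)=1$. This gives $u\in\text{span}\{f^{k+1}(x_i)\}\subseteq\Range(f^{k+1})$. Hence $\Range(f^k)=\Range(f^{k+1})$.

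\emph{Kernels.} Suppose $f^{k+1}(x)=0$. Then $T^{k+1}(x\otimes x)=0$, so $T^k(x\otimes x)=0$, i.e.\ $f^k(x)\otimes f^k(x)=0$, which forces $f^k(x)=0$. Thus $\ker f^{k+1}=\ker f^k$.

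Therefore $\text{Ind}(f)\le k$ and $f^D$ exists, completing case (i). This step, recovering range and kernel stabilization for $f$ from that of $P_2(f)$, is where the argument needs the most care, specifically the tensor-equality argument in the range part.
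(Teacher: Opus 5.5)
Your proposal is correct and follows the paper's proof essentially step for step. You split via Theorem~\ref{thm::Lim}, write $T^D$ as a scalar multiple of $T$ in case (ii), and in case (i) transfer range and kernel stabilization from $T=cP_2(f)$ to $f$ before verifying $T^D=c^{-1}P_2(f^D)$. Your separate treatment of the subcase $\phi(y\otimes y)=0$, where $T^2=0$ and $T^D=0$, is more careful than the paper, which wrongly claims $\text{Ind }T<\infty$ forces $\phi(y\otimes y)\neq 0$; your kernel step also correctly uses $f^k(x)\otimes f^k(x)=0$, where the paper has a typo.
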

\begin{proof}
    By Theorem~\ref{thm::Lim}, there are only two cases to consider. First, let $T$ be of form (ii) in Theorem~\ref{thm::Lim}. Then, for $k\in \Z_+$, we have
    $$T^{k+1}(x\otimes x)=(\phi(x\otimes x)(\phi(y\otimes y))^k) y\otimes y.$$ 
    Thus, $\text{Ind }T<\infty$ if and only if $\phi(y\otimes y)\neq 0$. Moreover, in that case, $\text{Ind }T=1$ and $T^D\colon X^{(2)}\to X^{(2)}$ is given by
    \[
        T^D(x\otimes x)=\frac{1}{\phi(y\otimes y)^2} \phi(x\otimes x) y\otimes y = \frac{1}{\phi(y\otimes y)^2} T(x\otimes x),\quad \forall x\in X.   
    \]
    Hence, $T^D$ is rank one non-increasing. Now, suppose $T$ is of form (i) in Theorem~\ref{thm::Lim}, i.e.
    \[
        T(x\otimes x)= c(f(x)\otimes f(x)), \quad \forall x\in X
    \]
    First we claim that the existence of $T^D$ implies the existence of $f^D$. Set $k:=\text{Ind }T$, and let $y\in \Range(f^k)$. Then $y=f^k(x)$ for some $x\in K$, and so $T^k(x\otimes x)=c^k (y\otimes y).$ Since $\Range(T^{k+1})=\Range(T^k)$, there exists $A\in X^{(2)}$ such that $T^{k+1}(A)= c^{k+1} (y\otimes y).$ Let $z_i\in X$ and $\lambda_i\in \R$ be such that $A=\sum_{i=1}^n \lambda_i (z_i\otimes z_i)$. Then 
    \[
        T^{k+1}(A)= c^{k+1} \left(\sum \lambda_i (f^{k+1}(z_i)\otimes f^{k+1}(z_i))\right).    
    \]
    Hence $y\otimes y=\sum \lambda_i (f^{k+1}(z_i)\otimes f^{k+1}(z_i))$, and so $y\in \text{span} \{ f^{k+1}(z_i)\colon 1\leq i\leq n\}$. Since $f$ is linear, it follows that $y\in \Range(f^{k+1})$, showing that $\Range (f^k)=\Range(f^{k+1}).$ Let $x\in \ker f^{k+1}$. Then $T^{k+1}(x\otimes x)=0$, and so $T^k(x\otimes x)=c ( f(x)\otimes f(x))=0$ showing that $x\in \ker f.$ Hence $f^D$ exists and $\text{Ind }f\leq k.$ We claim that 
    \[
        T^D = \frac{1}{c}~ P_2(f^D)=:S.
    \]
    Since $\{x\otimes x\mid x\in X\}$ spans $X^{(2)}$, it is enough to check the images on the decomposable elements. Let $x\in X$. Then
    \begin{align*}
        TS(x\otimes x)&=\frac{1}{c} T(f^D(x)\otimes f^D(x))\\
        &=ff^D(x)\otimes ff^D(x)\\
        &=f^Df(x)\otimes f^Df(x)\\
        &=ST(x\otimes x),
    \end{align*}
    \begin{align*}
        T^kST(x\otimes x)&=T^k(ff^D(x)\otimes ff^D(x))\\
        &=c^k (f^kf^Df(x)\otimes f^kf^Df(x))\\
        &=c^k(f^k(x)\otimes f^k(x))\\
        &=T^k(x\otimes x),
    \end{align*}
    \begin{align*}
        STS(x\otimes x)&=S(ff^D(x)\otimes ff^D(x))\\
        &=\frac{1}{c} (f^Dff^D(x)\otimes f^Dff^D(x))\\
        &=\frac{1}{c}(f^D(x)\otimes f^D(x))\\
        &=S(x\otimes x).
    \end{align*}
    This proves that $\text{Ind }T\leq k$ and $T^D=S$. Therefore, $T^D$ is rank one non-increasing by Theorem~\ref{thm::Lim}.
\end{proof}
\begin{remark}
    In the proof of Theorem~\ref{thm::inverse-of-rank-non-increasing}, we have shown that $\text{Ind }T=\text{Ind }f$. Hence, in particular, $T$ is bijective if and only if $f$ is bijective.
\end{remark}

We conclude with the following observation.

\begin{remark}
    Let $K:=\R^n_+$ in Corollary~\ref{Ex::COP}. Let $P\in M_n$ be a nonnegative invertible matrix such that $P^{-1}$ is not nonnegative. Then, the linear map $T\colon S^n\to S^n$, defined by $T(A)=PAP^\top$ for $A\in S^n$, preserves the set of all positive decomposable elements but $T^{-1}$, given by $T^{-1}(A)=P^{-1}A(P^{-1})^\top$, does not. For this reason, we do not pursue the problem of studying when a generalized inverse of a preserver of positive decomposable vectors, is again such a preserver.
\end{remark}


\end{document}